\documentclass{article}

\usepackage[T1]{fontenc}
\usepackage[utf8]{inputenc}
\usepackage{microtype}
\usepackage[letterpaper,left=1in,right=1in,top=0.8in,bottom=1in]{geometry}
\usepackage[bottom]{footmisc}
\usepackage{mathtools}
\usepackage{amssymb}
\usepackage{amsthm}
\usepackage{thmtools}
\usepackage{mdframed}
\allowdisplaybreaks[4]

\usepackage[table,dvipsnames]{xcolor}
\definecolor{bgcolor}{rgb}{0.8,1,1}
\definecolor{bgcolor2}{rgb}{0.8,1,0.8}
\definecolor{niceblue}{rgb}{0.0,0.19,0.56}
\definecolor{shadecolor}{gray}{0.9}
\usepackage{booktabs}
\usepackage{makecell}
\usepackage{graphicx}
\usepackage{subcaption}
\usepackage[shortlabels]{enumitem}
\usepackage{pifont}

\usepackage{algorithm}
\usepackage[noEnd=false]{algpseudocodex}
\usepackage[normalem]{ulem}
\usepackage{soul}

\usepackage[sort]{natbib}
\usepackage[hypertexnames=false,colorlinks,linkcolor={blue!75!black},%
            urlcolor=blue,citecolor={blue!75!black}]{hyperref}
\usepackage{cleveref}

\declaretheoremstyle[
  headfont=\normalfont\bfseries,
  notefont=\mdseries, notebraces={(}{)},
  bodyfont=\normalfont,
  postheadspace=0.5em,
  spaceabove=1pt,
  mdframed={
    skipabove=8pt,
    skipbelow=8pt,
    hidealllines=true,
    backgroundcolor={shadecolor},
    innerleftmargin=4pt,
    innerrightmargin=4pt}
]{shaded}

\declaretheorem[style=shaded,within=section]{definition}
\declaretheorem[style=shaded,sibling=definition]{theorem}
\declaretheorem[style=shaded,sibling=definition]{proposition}

\declaretheorem[style=shaded,sibling=definition]{lemma}
\declaretheorem[style=shaded,sibling=definition]{remark}

\newtheorem*{problem*}{Problem}

\crefname{algorithm}{algorithm}{algorithms}
\Crefname{algorithm}{Algorithm}{Algorithms}
\newcommand{\R}{\mathbb{R}}
\newcommand{\norm}[1]{\left\lVert #1\right\rVert}
\newcommand{\ip}[2]{\left\langle #1,#2\right\rangle}

\DeclareMathOperator{\sign}{sign}

\newcommand{\cmark}{\ding{51}}
\newcommand{\xmark}{\ding{55}}
\sethlcolor{yellow}

\title{\textbf{Curvature-Scaled Sharpness-Aware Minimization:\\ Convergence Beyond Smoothness}}

\author{
    Dimitris Oikonomou$^{1,2}$\\
    \texttt{doikono1@jh.edu} 
    \and 
    Nicolas Loizou$^{1,3}$\\
    \texttt{nloizou@jhu.edu}
}

\date{
    \vspace{1em}
    $^1$Mathematical Institute for Data Science (MINDS), Johns Hopkins University\\
    $^2$Department of Computer Science, Johns Hopkins University\\
    $^3$Department of Applied Mathematics and Statistics, Johns Hopkins University\\
    \vspace{0.5em}
}

\begin{document}

\maketitle

\begin{abstract}
    \noindent
    Sharpness-Aware Minimization (SAM) has established itself as a widely used method for training deep neural networks. Despite its popularity, existing convergence guarantees for SAM-type methods require smoothness, a restrictive assumption. Generalized smoothness conditions have recently been proposed in the analysis of gradient methods but remain underexplored for SAM. To analyze the effect of SAM's perturbation under generalized smoothness, we use relative gradient distortion to quantify how the sharpness-aware direction departs from the original gradient. Under $(L_0,L_1)$-smoothness, we show that this distortion can become unbounded near stationarity for SAM and at large gradients for its unnormalized variant (USAM), with examples where SAM cycles and USAM diverges. We then establish a necessary condition for uniform distortion control among perturbations along the gradient whose nonnegative multipliers depend only on its norm. Motivated by this condition, we propose Curvature-Scaled SAM (CS-SAM), which scales both the perturbation and the descent step by suitable curvature upper bounds, and prove that it uniformly bounds the distortion. Under both $(L_0,L_1)$- and $(H_0,H_1)$-smoothness, we establish deterministic $\mathcal{O}(1/T)$ rates for the best squared gradient norm on nonconvex objectives, last-iterate $\mathcal{O}(1/T)$ rates for the function gap on convex objectives with a minimizer, and linear convergence under the Polyak--\L{}ojasiewicz condition. With zero perturbation, our guarantees recover the rates for gradient descent with $(L_0,L_1)$- or $(H_0,H_1)$-based step-sizes. Minibatch experiments on computer vision tasks demonstrate competitive performance for both variants, with the $(H_0,H_1)$ variant achieving the highest mean best-test accuracy in all four settings against tuned SAM and USAM with constant and cosine annealing schedules.
\end{abstract}

\section{Introduction}
\label{sec:intro}

Sharpness-Aware Minimization (SAM) was introduced to improve generalization by seeking parameters whose training loss remains low throughout a local neighborhood \citep{foret2021sam}. It pursues this objective through a simple two-step procedure: first perturb the parameters in a direction of increasing loss, then update the original parameters using the gradient evaluated at the perturbed point. The choice of perturbation therefore determines how the resulting update departs from ordinary gradient descent. For a differentiable objective $f:\R^d\to\R$, standard SAM uses a fixed-radius perturbation of length $r$ along the normalized gradient, whereas its unnormalized variant (USAM) uses a fixed multiple $\rho$ of the gradient itself \citep{andriushchenko2022understanding}:
\begin{equation}
    y_t=x_t+r\frac{\nabla f(x_t)}{\norm{\nabla f(x_t)}},
    \qquad x_{t+1}=x_t-\eta_t\nabla f(y_t),
    \tag{SAM}\label{eq:sam}
\end{equation}
\begin{equation}
    y_t=x_t+\rho\nabla f(x_t),
    \qquad x_{t+1}=x_t-\eta_t\nabla f(y_t),
    \tag{USAM}\label{eq:usam}
\end{equation}
with radius $r>0$, multiplier $\rho\ge0$, and outer step $\eta_t>0$. Both rules cost two gradient evaluations per iteration, one at the iterate $x_t$ and one at the perturbed point $y_t$. 

\medskip
\noindent
Throughout, we consider the problem
\begin{equation}
    \min_{x\in\R^d} f(x),
    \qquad f_\star:=\inf_{x\in\R^d}f(x)>-\infty,
    \label{eq:problem}
\end{equation}
where $f:\R^d\to\R$ is twice continuously differentiable. In model training, $x$ represents the model parameters and $f$ the training loss. We call the Hessian operator norm $\norm{\nabla^2 f(x)}$ the local curvature at $x$. Classical smoothness bounds this quantity by a constant $L>0$:
\begin{equation}
    \norm{\nabla^2 f(x)}\le L
    \qquad\forall x\in\R^d.
    \label{eq:assSmooth}
\end{equation}
Classical $L$-smoothness can fail even for simple neural-network training objectives \citep{alimisis2026warmup}. Motivated by empirical observations that local curvature grows with the gradient norm during neural-network training, \citet{zhang2020clipping} introduced $(L_0,L_1)$-smoothness, which allows the Hessian norm to grow with the gradient norm. Subsequent theoretical results support this condition for specific neural-network models. In particular, the bounds of \citet{taheri2023fast} imply $(L_0,L_1)$-smoothness for two-layer networks with fixed output weights and smoothed leaky-ReLU activations, trained with exponential loss on linearly separable data. Complementary results establish $(H_0,H_1)$-smoothness for two-layer networks trained with $\ell_2$-regularized cross-entropy loss under suitable activation assumptions \citep{alimisis2026warmup}. These findings motivate the gradient-dependent and loss-dependent curvature bounds considered here:
\begin{align}
    \norm{\nabla^2 f(x)}&\le M_L(x):=L_0+L_1\norm{\nabla f(x)},
    &\text{$(L_0,L_1)$-smoothness},\label{eq:assL}\\
    \norm{\nabla^2 f(x)}&\le M_H(x):=H_0+H_1\bigl(f(x)-f_\star\bigr),
    &\text{$(H_0,H_1)$-smoothness},\label{eq:assH}
\end{align}
for every $x\in\R^d$, with $L_0>0$, $L_1\ge0$, and $H_0,H_1\ge0$. Note that both models contain classical smoothness as a special case ($L_1=0$ or $H_1=0$). With slight abuse of notation, in several parts of the paper we write $M_L:=M_L(x)$ and $M_H:=M_H(x)$. We refer to a positive function that upper-bounds the local curvature everywhere as a \emph{curvature scale}; $M_L$ and, for nonconstant objectives, $M_H$ provide the two scales studied in this work (\Cref{prop:admissible}).

\medskip
\noindent
The difficulty is that the curvature bound must control the gradient change along the perturbation, as well as the subsequent descent step. Even under classical smoothness, a fixed perturbation radius can prevent exact convergence: \citet{bartlett2022dynamics} analyzed cycling of fixed-radius \ref{eq:sam} on convex quadratics, and \citet{si2023practical} established stationarity neighborhoods in several settings. The inexact-gradient viewpoint of \citet{khanh2024fundamental} makes the perturbation-induced error explicit, while the clipped rule of \citet{adly2026stationarity} controls it near stationarity by making the perturbation proportional to the gradient. Under \eqref{eq:assL}, however, a gradient-proportional perturbation can also become too large when the gradient grows. Thus, extending SAM to generalized smoothness requires control at both ends of the gradient scale.

\medskip
\noindent
For ordinary gradient methods, adapting to curvature growth is already well studied. The $(L_0,L_1)$ model was introduced by \citet{zhang2020clipping} to explain gradient clipping and has since supported analyses of clipped, normalized, and adaptive methods \citep{zhang2020improved,crawshaw2022robustness,chen2023generalized,koloskova2023revisiting,li2023convex}. In particular, \citet{gorbunov2025methods} and \citet{vankov2025optimizing} analyzed steps inversely proportional to gradient-dependent curvature bounds. The loss-dependent model has likewise been used to explain learning-rate warm-up \citep{alimisis2026warmup} and to analyze line search and adaptive gradient methods \citep{vaswani2025armijo,vaswani2026nonuniform}. These results motivate scaling the outer descent step by a curvature bound. The remaining question for SAM is how that bound should also determine the perturbation.

\subsection{Main Contributions}

\paragraph{$\diamond$ Complementary failures and a necessary condition for distortion control.}
In \Cref{sec:distortion}, we show that the relative gradient distortion of \ref{eq:sam} can become unbounded as the gradient vanishes, while that of \ref{eq:usam} can become unbounded as the gradient grows. \Cref{prop:endpoint-failures} gives explicit examples within the $(L_0,L_1)$-smooth class: fixed-radius \ref{eq:sam} admits a nonzero two-cycle on a quadratic, and \ref{eq:usam} diverges from sufficiently large initializations on a quartic. Motivated by these complementary failures, \Cref{thm:necessary-scaling} establishes a necessary condition for uniformly bounded distortion over this class among perturbations along the gradient whose nonnegative multipliers depend only on its norm. The perturbation length must be at most proportional to the gradient norm near stationarity and bounded at large gradients. This condition motivates the gradient-dependent perturbation of CS-SAM.

\paragraph{$\diamond$ Curvature-Scaled Sharpness-Aware Minimization.}
In \Cref{sec:cssam}, we construct \ref{eq:cssam}, which, for a curvature scale $M$, iterates
\begin{equation}
    y_t=x_t+\frac{\rho}{M(x_t)}\,\nabla f(x_t),
    \qquad
    x_{t+1}=x_t-\frac{\gamma}{M(y_t)}\,\nabla f(y_t),
    \tag{CS-SAM}
\end{equation}
with perturbation multiplier $\rho/M(x_t)$ and descent step $\gamma/M(y_t)$, where $\rho\ge0$ and $\gamma>0$. We identify conditions on the curvature scale $M$ that ensure uniform distortion control and show that both $M_L$ and $M_H$ satisfy them. For every scale satisfying these conditions, \Cref{thm:relative-control} bounds the distortion by $e^\rho-1$ at every nonstationary iterate. 

\paragraph{$\diamond$ Convergence under generalized smoothness.}
We establish convergence guarantees for \ref{eq:cssam} under $(L_0,L_1)$-smoothness with $M=M_L$ and under $(H_0,H_1)$-smoothness with $M=M_H$ (\Cref{thm:L-rates,thm:H-rates}). In each setting, for suitable perturbation and step parameters, we prove $\mathcal{O}(1/T)$ rates for the best squared gradient norm on nonconvex objectives and last-iterate $\mathcal{O}(1/T)$ rates for the function gap on convex objectives with a minimizer. We also prove linear convergence in both settings under the Polyak--\L{}ojasiewicz (PL) condition, $\norm{\nabla f(x)}^2/2\ge\mu(f(x)-f_\star)$ for every $x$ and some $\mu>0$.

\paragraph{$\diamond$ Recovery of gradient descent guarantees for $\rho=0$.}
Setting $\rho=0$ recovers gradient descent with step $\gamma/M(x_t)$. Our bounds recover the known nonconvex and convex rates for gradient-dependent steps \citep{vankov2025optimizing,gorbunov2025methods}, and the convex and PL rates for loss-dependent steps \citep{alimisis2026warmup}, while allowing larger step sizes in the latter case. Thus the analysis extends existing gradient descent guarantees to positive sharpness-aware perturbations.

\paragraph{$\diamond$ Numerical Evaluation.}
In \Cref{sec:verification}, we verify \Cref{prop:endpoint-failures} and \Cref{thm:relative-control}: \ref{eq:sam} cycles and \ref{eq:usam} diverges, whereas \ref{eq:cssam} converges with a distortion below $e^\rho-1$ for both scales. In \Cref{sec:dnn}, we evaluate minibatch variants beyond the deterministic setting of our theory on ResNet-20/32 and CIFAR-10/100. Both use two gradient evaluations per iteration. Against tuned constant-step and cosine annealing-scheduled \ref{eq:sam} and \ref{eq:usam}, the loss-dependent variant achieves the highest mean best-test accuracy in all four settings, while the gradient-dependent variant exceeds all these baselines in three of four settings.

\medskip
\noindent
\Cref{tab:comparison} summarizes the distortion control and convergence guarantees alongside prior work. 

\begin{table}[t]
    \centering
    \caption{Comparison with the closest prior work. $D(x,d)=\norm{\nabla f(x+d)-\nabla f(x)}/\norm{\nabla f(x)}$
    is the relative gradient distortion of the perturbation $d$ (\Cref{sec:distortion}). Curvature
    models: $L$ means $\norm{\nabla^2 f(x)}\le L$; $(L_0,L_1)$ means
    $\norm{\nabla^2 f(x)}\le L_0+L_1\norm{\nabla f(x)}$; $(H_0,H_1)$ means
    $\norm{\nabla^2 f(x)}\le H_0+H_1(f(x)-f_\star)$, with $f_\star=\inf_x f(x)$. \cmark: $D$ is bounded by
    a constant; \xmark: the bound degenerates. Rates are on the best squared gradient norm over $T$ iterations (nonconvex), on
    the last-iterate gap $f(x_T)-f_\star$ for convex objectives with a minimizer (convex), and on the function gap
    under PL.}
    \label{tab:comparison}
    \resizebox{\linewidth}{!}{%
    \begin{tabular}{l l c c c c c}
    \toprule
    Method & Perturbation $d(x)$ & \makecell{Curvature\\model} & \makecell{$D$ uniformly\\bounded} & Nonconvex & Convex & PL \\
    \midrule
    \makecell[l]{\ref{eq:sam} \citep{foret2021sam}\\rates: \citep{si2023practical},\\ \citep{oikonomou2025general}} & $r\,\nabla f(x)/\norm{\nabla f(x)}$ & $L$ & \xmark & $\mathcal{O}(1/T)+L^2r^2$ & \makecell{no last-iterate\\gap rate} & \makecell{linear to a\\neighborhood} \\
    \addlinespace
    
    \makecell[l]{\ref{eq:usam}\\\citep{andriushchenko2022understanding}} & $\rho\,\nabla f(x)$ & $L$ & \cmark & $\mathcal{O}(1/T)$ & $\mathcal{O}(1/T)$ & linear \\
    \addlinespace
    
    \makecell[l]{Unified SAM\\\citep{oikonomou2025general}} & $\rho[(1-\lambda)+\lambda/\norm{\nabla f(x)}]\cdot\nabla f(x)$ & $L$ & \makecell{\cmark\ if $\lambda=0$;\\\xmark\ otherwise} & \makecell{$\mathcal{O}(T^{-1/2})$\\(in expectation)} & not analyzed & \makecell{$\lambda=0$: linear\\$\lambda>0$: linear to a \\neighborhood} \\
    \addlinespace
    
    \makecell[l]{Clipped perturbation\\\citep{adly2026stationarity}} & \makecell[l]{$\rho_k\nabla f(x)/\norm{\nabla f(x)}^{\alpha}$,\\$\rho_k=\min\{\rho,\frac{\theta}{L}\norm{\nabla f(x)}^\alpha\}$} & $L$ & \cmark & $\mathcal{O}(1/T)$ & not analyzed & not analyzed \\
    \addlinespace
    
    \makecell[l]{Inexact-gradient SAM\\\citep{khanh2024fundamental}} & \makecell[l]{normalized and\\unnormalized variants} & $L$ & \makecell{normalized: \xmark\\USAM: \cmark} & \makecell{$\norm{\nabla f(x_t)}\to0$\\(no generic rate)} & \makecell{$f(x_T)\to f_\star$\\(no rate)} & \makecell{USAM: linear under\\KL exponent $1/2$} \\
    \midrule
    \rowcolor{bgcolor2} Ours, \ref{eq:cssam} with $M_L$ & $\rho\,\nabla f(x)/(L_0+L_1\norm{\nabla f(x)})$ & $(L_0,L_1)$ & \cmark & $\mathcal{O}(1/T)$ & $\mathcal{O}(1/T)$ & linear \\
    \rowcolor{bgcolor2} Ours, \ref{eq:cssam} with $M_H$ & $\rho\,\nabla f(x)/(H_0+H_1(f(x)-f_\star))$ & $(H_0,H_1)$ & \cmark & $\mathcal{O}(1/T)$ & $\mathcal{O}(1/T)$ & linear \\
    \bottomrule
    \end{tabular}
    }
\end{table}

\section{From gradient distortion to curvature-scaled SAM}
\label{sec:failures}

In this section, we define relative gradient distortion and explain how it controls the difference between a sharpness-aware direction and the gradient used by gradient descent. We then show that \ref{eq:sam} and \ref{eq:usam} can lose this control at opposite ends of the gradient scale, and derive a necessary condition for keeping it uniformly bounded. This condition motivates the CS-SAM update, for which we prove a uniform distortion bound under both generalized smoothness models. The proofs are deferred to \Cref{app:endpoint}; the distortion bound follows from the auxiliary lemmas in \Cref{app:lemmas}.

\subsection{Relative gradient distortion and the failures of SAM and USAM}
\label{sec:distortion}

The outer step of \ref{eq:sam} and \ref{eq:usam} is a gradient step that uses $\nabla f(y_t)$ in place of $\nabla f(x_t)$, so a descent analysis must account for the effect of this substitution as well as the step size. We now measure the relative error of this substitution. For $\nabla f(x)\ne0$ and a displacement $d$, we define the \emph{relative gradient distortion}
\begin{equation}
    D(x,d):=\frac{\norm{\nabla f(x+d)-\nabla f(x)}}{\norm{\nabla f(x)}}.
    \label{eq:distortion}
\end{equation}
Let us explain why this is the right quantity. If $D(x,d)\le\delta<1$, then $\nabla f(x+d)$ makes an acute angle with $\nabla f(x)$, since $\ip{\nabla f(x)}{\nabla f(x+d)}\ge(1-\delta)\norm{\nabla f(x)}^2$, and has comparable length, since $(1-\delta)\norm{\nabla f(x)}\le\norm{\nabla f(x+d)}\le(1+\delta)\norm{\nabla f(x)}$. These are exactly the two properties the descent lemma requires, so $D(x,d)\le\delta<1$ is the property we want a perturbation to have, and we want it at \emph{every} point. Controlling the perturbation-induced gradient error is also the viewpoint of the inexact-gradient analyses of \citet{khanh2024fundamental}, and relative control of this error near stationarity is what the clipped rule of \citet{adly2026stationarity} enforces; the quantity itself is therefore not new. Our question is which perturbation rules keep it uniformly bounded under generalized smoothness. 

\medskip
\noindent
We first record what the two base rules can guarantee. Under \eqref{eq:assSmooth} we have
\begin{equation}
    D_{\mathrm{SAM}}\le \frac{Lr}{\norm{\nabla f(x)}},
    \qquad
    D_{\mathrm{USAM}}\le L\rho.
    \label{eq:heuristic-bounds-smooth}
\end{equation}
Note that the bound for SAM degenerates as $\norm{\nabla f(x)}\to0$, while the bound for USAM is uniform. Under the more general \eqref{eq:assL}, for $L_1>0$ (both bounds are proved in \Cref{app:endpoint}):
\begin{equation}
    D_{\mathrm{SAM}}\le \bigl(e^{L_1r}-1\bigr)\left(1+\frac{L_0}{L_1\norm{\nabla f(x)}}\right),
    \qquad
    D_{\mathrm{USAM}}\le \frac{L_0+L_1\norm{\nabla f(x)}}{L_1\norm{\nabla f(x)}}
    \bigl(e^{L_1\rho\norm{\nabla f(x)}}-1\bigr).
    \label{eq:heuristic-bounds-l0l1}
\end{equation}
The bound for SAM tends to the constant $e^{L_1r}-1$ at large gradients and degenerates as $\norm{\nabla f(x)}\to0$. The bound for USAM tends to $\rho L_0$ as $\norm{\nabla f(x)}\to0$ and grows exponentially as $\norm{\nabla f(x)}\to+\infty$. This suggests that SAM's distortion can explode when the gradient vanishes, while USAM's distortion can explode when the gradient grows. The following proposition makes the two failures precise.

\begin{proposition}
    \label{prop:endpoint-failures}
    Both perturbations can fail on one-dimensional convex objectives: \ref{eq:sam} admits a nonzero two-cycle on a quadratic, and \ref{eq:usam} diverges for large initializations on a quartic.
    \begin{enumerate}[label=(\roman*), align=left, leftmargin=*, widest=ii]
        \item
        Let $f(x)=x^2/2$, which satisfies \eqref{eq:assSmooth} with $L=1$. Run \ref{eq:sam} with $r>0$ and $\eta\in(0,2)$. For every $x\ne0$,
        \begin{equation}
            D_{\mathrm{SAM}}(x):=D(x,r\sign(x))=\frac{r}{|x|},
            \qquad
            x_{t+1}=\bigl[1-\eta\,(1+D_{\mathrm{SAM}}(x_t))\bigr]\,x_t .
        \end{equation}
        Let $a=\eta r/(2-\eta)$. Then $|x_{t+1}|<|x_t|$ if $|x_t|>a$, $|x_{t+1}|>|x_t|$ if $0<|x_t|<a$, and $x_{t+1}=-x_t$ if $|x_t|=a$. Consequently $\pm a$ is a two-cycle, and no trajectory converges to the minimizer (unless it lands on it exactly).

        \item
        Let $f(x)=x^4/4$, which satisfies \eqref{eq:assL} with $(L_0,L_1)=(4,1)$. Run \ref{eq:usam} with $\rho>0$ and $\eta>0$. For every $x\ne0$,
        \begin{equation}
            D_{\mathrm{USAM}}(x):=D(x,\rho x^3)=(1+\rho x^2)^3-1,
            \qquad
            x_{t+1}=\bigl[1-\eta\,(1+D_{\mathrm{USAM}}(x_t))\,x_t^2\bigr]\,x_t .
        \end{equation}
        Consequently, if $|x_0|>(2/(\eta\rho^3))^{1/8}$, then $x_{t+1}$ has the opposite sign of $x_t$ and $|x_{t+1}|>|x_t|$ for every $t$, and $|x_t|\to\infty$.
    \end{enumerate}
\end{proposition}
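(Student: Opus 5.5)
Both parts are direct one-dimensional computations, so the plan is to write the update of each method in closed form, read off the distortion, and then analyze the resulting scalar recursion.

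\textbf{Part (i).} For $f(x)=x^2/2$ we have $\nabla f(x)=x$ and $\nabla^2 f\equiv1$, so \eqref{eq:assSmooth} holds with $L=1$. For $x\ne0$ the perturbation is $d=r\sign(x)$. This gives $\nabla f(x+d)-\nabla f(x)=r\sign(x)$, hence $D_{\mathrm{SAM}}(x)=r/|x|$. The update is $x_{t+1}=x_t-\eta(x_t+r\sign(x_t))$, which rewritten is $x_{t+1}=[1-\eta(1+r/|x_t|)]x_t$.

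Write $|x_t|=s$. Then $x_{t+1}\sign(x_t)=(1-\eta)s-\eta r$, so $|x_{t+1}|=|(1-\eta)s-\eta r|$. Compare this with $s$:
\begin{itemize}
\item If $(1-\eta)s-\eta r\ge0$, then $|x_{t+1}|<s$ trivially, since $\eta r>0$ and $\eta s>0$.
\item Otherwise $|x_{t+1}|=\eta r-(1-\eta)s$. This is $<s$ iff $s>\eta r/(2-\eta)=a$, equals $s$ iff $s=a$, and is $>s$ iff $s<a$.
\end{itemize}
The first case has $s\ge \eta r/(1-\eta)>a$ (when $\eta<1$), so it only arises for $s>a$ and is consistent with the trichotomy.

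At $s=a$ we get $(1-\eta)a-\eta r=-a$, so $x_{t+1}=-x_t$, and $\pm a$ is a two-cycle.

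For non-convergence, suppose $x_t\to0$ with $x_t\ne0$. Then eventually $|x_t|<a$, and from then on $|x_t|$ is strictly increasing, which is a contradiction. Hence no trajectory converges to $0$ unless it hits $0$ exactly.

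\textbf{Part (ii).} For $f(x)=x^4/4$ we have $f'=x^3$ and $f''=3x^2$. To verify \eqref{eq:assL} with $(4,1)$, I need $3x^2\le 4+|x|^3$. This follows from AM--GM or a case split: $|x|\le 1$ gives $3x^2\le3$, and $|x|\ge1$ makes $|x|^3-3x^2+4=(|x|-2)^2(|x|+1)\ge0$.

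The perturbation is $d=\rho x^3$, so $y=x(1+\rho x^2)$ and $f'(y)=x^3(1+\rho x^2)^3$. This gives $D=(1+\rho x^2)^3-1$ and
$$x_{t+1}=x_t-\eta x_t^3(1+\rho x_t^2)^3,$$
which is the stated form.

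Let $c_t=\eta x_t^2(1+\rho x_t^2)^3$, so that $x_{t+1}=(1-c_t)x_t$. The sign flips and $|x_{t+1}|>|x_t|$ exactly when $c_t>2$. Since $c_t\ge \eta\rho^3x_t^8$, it suffices that $x_t^8>2/(\eta\rho^3)$, which holds at $t=0$ by assumption. Then $|x_1|>|x_0|$, so the hypothesis propagates by induction, and $|x_t|$ is strictly increasing.

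The main obstacle is upgrading monotonicity to divergence, since increasing alone does not rule out a finite limit. I would argue quantitatively. Once $c_0>2$, we get $|x_{t+1}|=(c_t-1)|x_t|$, and $c_t\ge c_0$ because $c$ is increasing in $|x|$. So $|x_t|\ge (c_0-1)^t|x_0|$ with $c_0-1>1$, giving $|x_t|\to\infty$ geometrically.
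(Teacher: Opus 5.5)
Your proposal is correct and follows the paper's proof essentially step for step: the same closed-form recursions, the same comparison of the multiplicative factor with $-1$ to get the trichotomy at $a=\eta r/(2-\eta)$, the same eventual-monotonicity contradiction for non-convergence, and the same induction on $c_t>2$ followed by a geometric lower bound on $|x_t|$. The differences are cosmetic. You verify $3x^2\le 4+|x|^3$ via the factorization $(|x|-2)^2(|x|+1)$, which holds for all $|x|\ge0$ and makes your case split unnecessary, whereas the paper maximizes $3u^2-u^3$; and you use the growth factor $c_0-1$ where the paper uses $\eta\rho^3x_0^8-1$.
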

The quadratic construction can be viewed as an exact illustration of established late-stage phenomena: \citet{bartlett2022dynamics} showed that fixed-radius SAM typically approaches a two-cycle on convex quadratics, and \citet{si2023practical} established unavoidable stationarity neighborhoods for practical fixed-radius SAM in several settings. Our point is the complementarity: the distortion can become unbounded at opposite ends of the gradient scale, with nonconvergence for these fixed-parameter updates. The following theorem addresses all perturbations along the gradient whose nonnegative multiplier depends only on the gradient norm: it gives a necessary condition for uniform boundedness of the relative distortion over the whole class of objectives in \eqref{eq:assL}.

\begin{theorem}
    \label{thm:necessary-scaling}
    Let $L_0>0$, $L_1>0$, and consider the perturbation given by a nonnegative collinear rule $d(x)=\alpha(\norm{\nabla f(x)})\nabla f(x)$. Suppose that for every one-dimensional objective satisfying \eqref{eq:assL} and every point with nonzero gradient, $D(x,d(x))\le\delta$. Then, for every $s>0$,
    \begin{equation}
        \alpha(s)\le \min\left\{\frac{\delta}{L_0},
        \frac{\log(1+\delta)}{L_1s}\right\}.
        \label{eq:necessary}
    \end{equation}
\end{theorem}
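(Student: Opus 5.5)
We prove the two upper bounds in \eqref{eq:necessary} separately, each by exhibiting a one-dimensional test objective in the $(L_0,L_1)$ class on which the perturbation $d(x)=\alpha(s)\nabla f(x)$ with $s=\norm{\nabla f(x)}$ has an explicitly computable distortion. The assumed bound $D\le\delta$ then becomes an inequality on $\alpha(s)$. Throughout, fix $s>0$ and write $\alpha=\alpha(s)\ge0$.

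\emph{First bound, $\alpha(s)\le\delta/L_0$.} Take the quadratic $f(x)=L_0x^2/2$. Its Hessian is $L_0\le L_0+L_1|f'(x)|$, so it satisfies \eqref{eq:assL}. Choose the point $x=s/L_0>0$, so that $f'(x)=s$. The perturbation is $d=\alpha s$ and $f'(x+d)-f'(x)=L_0\alpha s$. Hence $D=L_0\alpha$, and the hypothesis $D\le\delta$ gives $\alpha\le\delta/L_0$.

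\emph{Second bound, $\alpha(s)\le\log(1+\delta)/(L_1s)$.} Here we need an objective whose curvature grows as fast as \eqref{eq:assL} allows, that is, $|f''|=L_1|f'|$ along the direction of the perturbation. Take $f(x)=e^{L_1x}/L_1^2$, with $f'(x)=e^{L_1x}/L_1$ and $f''(x)=e^{L_1x}=L_1f'(x)\le L_0+L_1|f'(x)|$. This objective is bounded below by $0$, so $f_\star>-\infty$ as required in \eqref{eq:problem}. Choose $x$ with $f'(x)=s$. The perturbation is $d=\alpha s>0$, which moves to the right, where the gradient increases. Then $f'(x+d)=s\,e^{L_1\alpha s}$, so $D=e^{L_1\alpha s}-1$. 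The hypothesis $D\le\delta$ gives $L_1\alpha s\le\log(1+\delta)$, which is the claim. Combining the two bounds yields the minimum in \eqref{eq:necessary}.

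\emph{Main obstacle.} The only delicate point is whether the statement's "class of objectives" also carries the standing assumptions: twice continuous differentiability and $f_\star>-\infty$. Both test functions are $C^\infty$ and bounded below, so neither causes trouble. If one also wanted a minimizer or convexity, the exponential could be replaced by a smooth convex function that agrees with $e^{L_1x}/L_1^2$ on $[x,x+d]$ and is glued to a function with bounded curvature for very negative $x$. Since $D$ depends only on $f'$ over $[x,x+d]$, such a modification would not change the computation.
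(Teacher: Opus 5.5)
Your proposal is correct and matches the paper's proof essentially verbatim: the quadratic $L_0x^2/2$ at $x=s/L_0$ gives $D=L_0\alpha(s)$, and the exponential $e^{L_1x}/L_1^2$ at the point where $f'(x)=s$ gives $D=e^{L_1\alpha(s)s}-1$. The only nit is that $d=\alpha(s)s$ is merely $\ge0$, not $>0$, but the case $\alpha(s)=0$ is trivial.
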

Note that the theorem covers \ref{eq:sam} with $\alpha(s)=r/s$ and \ref{eq:usam} with $\alpha(s)=\rho$. The bound says that the perturbation length $\alpha(s)s$ must be at most linear in the gradient near $s=0$ and at most a constant for large $s$. \ref{eq:sam} violates the first requirement since $r/s\to\infty$ as $s\to0$, and \ref{eq:usam} violates the second since $\rho s\to\infty$ as $s\to\infty$. In words, every perturbation of this form with uniformly bounded distortion must be USAM-like near stationarity and SAM-like at large gradients. Dividing $\rho\nabla f(x)$ by $L_0+L_1\norm{\nabla f(x)}$ gives the multiplier $\alpha(s)=\rho/(L_0+L_1s)$, which has both properties and lies below \eqref{eq:necessary} for every $s$ once $\delta=e^\rho-1$. Satisfying the necessary condition alone does not establish bounded distortion. We prove that this construction is sufficient next, and extend it to the loss-dependent scale.

\subsection{Admissible scales and the CS-SAM update}
\label{sec:cssam}

\Cref{thm:necessary-scaling} motivates a perturbation that divides the gradient by a local curvature upper bound. In this subsection we isolate the properties of such a bound that our analysis uses, state the update, and prove that it keeps the distortion bounded for any scale with these properties.

\begin{definition}[Admissible curvature scale]
    \label{def:admissible}
    Let $\kappa\ge0$. A function $M:\R^d\to(0,\infty)$ is \emph{$\kappa$-admissible} for $f$ if, for all $x,y\in\R^d$,
    \begin{align}
        \norm{\nabla^2 f(x)}\le M(x),
        \qquad
        M(y)\le M(x)\,e^{\kappa\norm{y-x}},
        \qquad
        \kappa\norm{\nabla f(x)}\le M(x).
        \label{eq:admissible}
    \end{align}
\end{definition}
Let us read the three conditions in \eqref{eq:admissible}. The first says that $M$ is a curvature scale, that is, it upper-bounds the local curvature everywhere. The second says that the scale varies at most exponentially in the distance travelled, so that a short move changes it by a bounded factor. The third ties the scale to the gradient norm; it is what makes a step proportional to $\nabla f(x)/M(x)$ short, so that it stays under control along the perturbation. Under \eqref{eq:assL}, the scale $M_L(x)=L_0+L_1\norm{\nabla f(x)}$ is $L_1$-admissible, so the concrete scale suggested by \Cref{thm:necessary-scaling} is an instance of this definition. The following proposition records this together with the corresponding facts for the scales of \eqref{eq:assSmooth} and \eqref{eq:assH}.

\begin{proposition}[Properties of the curvature scales]
    \label{prop:admissible}
    \leavevmode
    \begin{enumerate}[label=(\roman*), align=left, leftmargin=*, widest=ii]
        \item Under \eqref{eq:assSmooth}, the constant scale $M\equiv L$ is
        admissible with $\kappa=0$.

        \item Under \eqref{eq:assL}, the scale $M_L$ is admissible with
        $\kappa=L_1$.

        \item Under \eqref{eq:assH}, either $M_H\equiv0$ and $f$ is constant,
        or $M_H>0$ everywhere and the scale $M_H$ is admissible with
        $\kappa=\sqrt{H_1}$.

        \item Under \eqref{eq:assL}, for every $x\in\R^d$,
        \begin{align}
            L_0+L_1\norm{\nabla f(x)}
            \le\frac32L_0+\frac{3L_1^2}{2\,(2\log2-1)}\bigl(f(x)-f_\star\bigr).
            \label{eq:L-implies-H}
        \end{align}
        In particular, \eqref{eq:assL} implies \eqref{eq:assH} with the
        constants $H_0=3L_0/2$ and $H_1=3L_1^2/(2(2\log2-1))$.
    \end{enumerate}
\end{proposition}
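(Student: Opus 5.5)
The plan is to treat the four parts separately. Throughout, the main tool is a Grönwall-type comparison along segments. The first and third conditions of \eqref{eq:admissible} are direct, and the work lies in the exponential-growth condition $M(y)\le M(x)e^{\kappa\norm{y-x}}$.

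\emph{Parts (i) and (ii).} Part (i) is immediate: $\norm{\nabla^2 f}\le L$, the scale $M\equiv L$ is constant, and $0\cdot\norm{\nabla f}\le L$. For (ii), the first condition is \eqref{eq:assL} itself, and the third holds because $L_1\norm{\nabla f(x)}\le L_0+L_1\norm{\nabla f(x)}$.

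For the second condition, set $x(\tau)=x+\tau(y-x)$ and $g(\tau)=\norm{\nabla f(x(\tau))}$. Since $\nabla f$ is $C^1$, $g$ is Lipschitz on $[0,1]$, and for almost every $\tau$
\[
|g'(\tau)|\le\norm{\nabla^2 f(x(\tau))}\,\norm{y-x}\le (L_0+L_1 g(\tau))\norm{y-x}.
\]
Hence $m(\tau)=L_0+L_1g(\tau)$ satisfies $m'\le L_1\norm{y-x}\,m$ almost everywhere. Grönwall's inequality (equivalently, $\log m$ is Lipschitz with constant $L_1\norm{y-x}$) then gives $m(1)\le m(0)e^{L_1\norm{y-x}}$.

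\emph{Part (iii).} Let $\phi=f-f_\star\ge0$ and $M_H=H_0+H_1\phi$.
\begin{itemize}
\item If $H_0>0$, then $M_H>0$ everywhere.
\item If $H_0=0$ and $H_1=0$, then $\nabla^2 f\equiv0$, so $f$ is affine. An affine function bounded below is constant, and then $M_H\equiv0$.
\item If $H_0=0$ and $H_1>0$, suppose $\phi(x_0)=0$ at some point. Then $x_0$ is a global minimizer and $\nabla f(x_0)=0$. Along any ray from $x_0$, $\phi$ obeys $\phi''\le H_1\phi\,\norm{v}^2$ with $\phi=\phi'=0$ at the start. Comparing with the ODE $u''=cu$ (a Grönwall argument applied to $\phi+|\phi'|$) forces $\phi\equiv0$, so $f$ is constant and $M_H\equiv0$. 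Otherwise $\phi>0$ and $M_H>0$ everywhere.
\end{itemize}

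The third condition for $M_H$ is the crucial gradient bound $\norm{\nabla f(x)}^2\le 2M_H(x)\phi(x)/H_1$ in a form strong enough to give $\sqrt{H_1}\norm{\nabla f}\le M_H$. The standard route is a descent-lemma argument: step from $x$ to $x-t\nabla f(x)$ and bound $\phi$ along the step using the local curvature, which itself depends on $\phi$. Because the curvature bound grows with $\phi$, this needs a self-bounding argument on the segment. I expect this to be the main obstacle.

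Concretely, I would consider $\psi(\tau)=\phi(x-\tau u)$ with $u=\nabla f(x)/\norm{\nabla f(x)}$. It satisfies $\psi''\le H_0+H_1\psi$, $\psi(0)=\phi$, $\psi'(0)=-\norm{\nabla f}$ and $\psi\ge0$. Comparing with the ODE solution $w$ of $w''=H_0+H_1w$ with the same initial data gives $\psi\le w$. The explicit solution is
\[
w(\tau)+\tfrac{H_0}{H_1}=\Bigl(\phi+\tfrac{H_0}{H_1}\Bigr)\cosh(\sqrt{H_1}\tau)-\tfrac{\norm{\nabla f}}{\sqrt{H_1}}\sinh(\sqrt{H_1}\tau).
\]
Nonnegativity of $\psi$ then requires $w+H_0/H_1\ge0$ for all $\tau$ (a slight care is needed since $w\ge -H_0/H_1$ is the right threshold, not $w\ge0$). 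Letting $\tau\to\infty$ gives $\norm{\nabla f}/\sqrt{H_1}\le \phi+H_0/H_1$, that is, $\sqrt{H_1}\norm{\nabla f}\le M_H$.

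For the second condition, along a segment $|\tfrac{d}{d\tau}M_H|=H_1|\langle\nabla f,\dot x\rangle|\le \sqrt{H_1}M_H\norm{y-x}$ by the bound just proved. Grönwall's inequality then gives the exponential factor with $\kappa=\sqrt{H_1}$.

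\emph{Part (iv).} I would reuse the same device under \eqref{eq:assL}. Step from $x$ along $-u$ by a length $\tau$; by (ii), $\norm{\nabla^2 f}\le M_L(x)e^{L_1\tau}$ on the segment. Integrating twice gives
\[
0\le\phi(x-\tau u)\le\phi(x)-s\tau+M_L(x)\,\frac{e^{L_1\tau}-1-L_1\tau}{L_1^2},\qquad s=\norm{\nabla f(x)}.
\]
Choosing $\tau=1/L_1$ yields $s/L_1\le\phi+(e-2)(L_0+L_1s)/L_1^2$.

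That specific choice does not reproduce the stated constants. Instead I would keep $\tau$ general and optimize it, likely at $L_1\tau=\log2$, which produces the factor $2\log2-1$. Then I would apply Young-type splitting so that $L_0+L_1s$ is bounded by $\tfrac32L_0+\tfrac{3L_1^2}{2(2\log2-1)}\phi$. Matching the exact constants is routine algebra once the right $\tau$ is fixed.
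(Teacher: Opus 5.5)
Parts (i)--(iii) are essentially sound. Two small repairs in (iii). First, your displayed target $\norm{\nabla f(x)}^2\le 2M_H(x)\phi(x)/H_1$ has a spurious factor $1/H_1$, but it is never used: your cosh/sinh comparison correctly gives $\sqrt{H_1}\norm{\nabla f(x)}\le M_H(x)$ directly, which is all admissibility needs. Second, in the zero-scale case a Gr\"onwall argument on $\phi+|\phi'|$ does not go through, since nothing bounds $-\phi''$ from above. Use instead $\phi'(t)\le c\int_0^t\phi$, hence $\phi(t)\le ct\int_0^t\phi$ with $\phi\ge0$, or the variation-of-constants comparison with $u''=cu$ that you also mention.

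The genuine gap is in (iv). Write $v=L_1\tau$ and $m=L_0+L_1s$ (with $\phi=f(x)-f_\star$). Your fixed-length inequality is then equivalent to
\[
m\,\bigl(1+2v-e^{v}\bigr)\le L_1^2\phi+L_0v .
\]
At $v=\log2$ it gives $m\le\frac{\log2}{2\log2-1}L_0+\frac{1}{2\log2-1}L_1^2\phi$, and $\log2/(2\log2-1)\approx1.79>3/2$. This bound is already linear in $L_0$ and $\phi$, so there is no cross term for a Young-type splitting to act on, and nothing can lower the $L_0$ coefficient. Its right-hand side exceeds that of \eqref{eq:L-implies-H} whenever $L_1^2\phi<(3-4\log2)L_0$. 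So the last step is not routine algebra; with your choice of $\tau$ it fails.

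The missing idea is to let the step length depend on the point through the curvature scale. Take $\tau=(\log2)\,s/M_L(x)$, that is, the step $-(\log2)\nabla f(x)/M_L(x)$.
\begin{itemize}
\item Since $L_1s\le M_L(x)$, you have $L_1\tau\le\log2$.
\item Because $(e^{v}-1-v)/v^2$ is increasing, your inequality then yields $\phi\ge(2\log2-1)s^2/M_L(x)$; this is the paper's \Cref{lem:gap}.
\item This is a quadratic inequality $s^2\le A+Bs$ with $A=L_0\phi/(2\log2-1)$ and $B=L_1\phi/(2\log2-1)$, so $s\le B+\sqrt A$.
\item Hence $m\le L_0+L_1^2\phi/(2\log2-1)+\sqrt{L_0\cdot L_1^2\phi/(2\log2-1)}$, and AM--GM on this square-root cross term produces both factors $3/2$.
\end{itemize}
Alternatively, a different fixed length does work. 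For example, $v=0.35$ gives $m\le1.25\,L_0+3.56\,L_1^2\phi$, which implies \eqref{eq:L-implies-H} since $3/(2(2\log2-1))\approx3.88$. But $v=\log2$ is not such a choice.
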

Items (i)--(iii) say that the three curvature bounds in \eqref{eq:assSmooth}, \eqref{eq:assL} and \eqref{eq:assH} are admissible, with $\kappa=0$, $\kappa=L_1$, and $\kappa=\sqrt{H_1}$ respectively; the only exception is the trivial case of a constant $f$, for which $M_H$ vanishes and every point is a global minimizer. Item (iv) is an inclusion between the two general models: every objective satisfying \eqref{eq:assL} also satisfies \eqref{eq:assH}, with $H_0=3L_0/2$ and $H_1=3L_1^2/(2(2\log2-1))$. This inclusion also appears as Proposition~B.1 of \citet{alimisis2026warmup} and as Proposition~3 of \citet{vaswani2025armijo}, with different constants. 

\paragraph{The update.} Given an admissible curvature scale $M$, parameters $\rho\ge0$ and
$\gamma>0$, and a starting point $x_0$, we propose Curvature-Scaled SAM, which iterates
\begin{align}
    y_t=x_t+\frac{\rho}{M(x_t)}\,\nabla f(x_t),
    \qquad
    x_{t+1}=x_t-\frac{\gamma}{M(y_t)}\,\nabla f(y_t).
    \tag{CS-SAM}\label{eq:cssam}
\end{align}
Here, $\rho\ge0$ sets the perturbation and $\gamma>0$ the descent step. The perturbation step $\rho/M(x_t)$ and the descent step $\gamma/M(y_t)$ use the \emph{same} curvature scale, each evaluated at the point where the corresponding gradient is taken. If $\nabla f(x_t)=0$ the method stops at $x_t$. Two instances are of particular interest and are compared in \Cref{tab:scales}. With the gradient scale $M=M_L$, the perturbation step is $\rho/(L_0+L_1\norm{\nabla f(x_t)})$ and the descent step is $\gamma/(L_0+L_1\norm{\nabla f(y_t)})$; in particular, the perturbation multiplier is exactly the $\alpha(s)=\rho/(L_0+L_1s)$ permitted by \Cref{thm:necessary-scaling}. With the loss scale $M=M_H$, the perturbation step is $\rho/(H_0+H_1(f(x_t)-f_\star))$ and the descent step is $\gamma/(H_0+H_1(f(y_t)-f_\star))$. Note that in this case, the perturbation multiplier is \emph{not} covered by \Cref{thm:necessary-scaling}. Like \ref{eq:sam} and \ref{eq:usam}, \ref{eq:cssam} costs two gradient evaluations per iteration; the scale $M_L$ is computed from the gradient already in hand, and $M_H$ from the function value and $f_\star$. 

\medskip
\noindent
Let us highlight the degenerate case. For $\rho=0$ we have $y_t=x_t$, and \ref{eq:cssam} reduces to curvature-scaled gradient descent, that is, gradient descent with the curvature-scaled step $\gamma/M(x_t)$:
\begin{equation}
    x_{t+1}=x_t-\frac{\gamma}{M(x_t)}\,\nabla f(x_t).
    \tag{CS-GD}\label{eq:csgd}
\end{equation}
\ref{eq:cssam} is thus a generalization of \ref{eq:csgd}, whose instances at the three scales are listed in the last column of \Cref{tab:scales}, and in \Cref{sec:rates} we compare its guarantees with known gradient descent results.

\begin{table}[t]
    \centering
    \caption{The curvature scales associated with \eqref{eq:assSmooth}, \eqref{eq:assL} and \eqref{eq:assH} as instances of \ref{eq:cssam}. For each scale we list the assumption under which it is a curvature scale, the admissibility constant $\kappa$ of \Cref{prop:admissible}, the perturbation and descent steps of \ref{eq:cssam}, and the curvature-scaled gradient method obtained at $\rho=0$ (\ref{eq:csgd}, \Cref{sec:cssam}).}
    \label{tab:scales}
    \resizebox{\linewidth}{!}{%
    \begin{tabular}{l c c c c l}
    \toprule
    \textbf{Curvature scale $M(x)$} & \textbf{Assumption} & $\boldsymbol{\kappa}$ & \textbf{Perturbation step $\rho/M(x_t)$} & \textbf{Descent step $\gamma/M(y_t)$} & \textbf{\ref{eq:cssam} at $\rho=0$ (\ref{eq:csgd})} \\
    \midrule
    $L$ (constant) & \eqref{eq:assSmooth} & $0$ & $\rho/L$ & $\gamma/L$ & gradient descent with step $\gamma/L$ \\
    \addlinespace
    $M_L(x)=L_0+L_1\norm{\nabla f(x)}$ & \eqref{eq:assL} & $L_1$ & $\dfrac{\rho}{L_0+L_1\norm{\nabla f(x_t)}}$ & $\dfrac{\gamma}{L_0+L_1\norm{\nabla f(y_t)}}$ & \shortstack[l]{the gradient method of \citet{vankov2025optimizing}, \\ \citet{gorbunov2025methods}} \\
    \addlinespace
    $M_H(x)=H_0+H_1(f(x)-f_\star)$ & \eqref{eq:assH} & $\sqrt{H_1}$ & $\dfrac{\rho}{H_0+H_1(f(x_t)-f_\star)}$ & $\dfrac{\gamma}{H_0+H_1(f(y_t)-f_\star)}$ & \shortstack[l]{a loss-dependent step similar to\\ \citet{alimisis2026warmup}} \\
    \bottomrule
    \end{tabular}
    }
\end{table}

\medskip
\noindent
In contrast to \ref{eq:sam} and \ref{eq:usam}, the relative distortion of \ref{eq:cssam} is uniformly bounded for \emph{any} admissible curvature scale, as we now show.

\begin{theorem}
    \label{thm:relative-control}
    Let $M$ be $\kappa$-admissible, let $\rho\ge0$, $\gamma>0$, and run \ref{eq:cssam}. Then every iteration with $\nabla f(x_t)\ne0$ satisfies
    \begin{equation}
        e^{-\rho}M(x_t)\le M(y_t)\le e^\rho M(x_t),
        \qquad
        \frac{\norm{\nabla f(y_t)-\nabla f(x_t)}}{\norm{\nabla f(x_t)}}\le e^\rho-1.
    \end{equation}
    In particular, $D_{\mathrm{CS\text{-}SAM}}\le e^\rho-1$.
\end{theorem}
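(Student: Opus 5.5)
Set $g=\nabla f(x_t)\neq 0$, $h=\frac{\rho}{M(x_t)}\,g$ and $y_t=x_t+h$. The third admissibility condition $\kappa\norm{g}\le M(x_t)$ immediately gives
\begin{equation*}
    \kappa\norm{y_t-x_t}=\frac{\kappa\rho\norm{g}}{M(x_t)}\le\rho .
\end{equation*}

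\textbf{Scale comparison.} Apply the second condition of \eqref{eq:admissible} with the pair $(x_t,y_t)$ to get $M(y_t)\le M(x_t)e^{\rho}$. Apply it with the pair $(y_t,x_t)$, which is allowed because the condition holds for all $x,y$, to get $M(x_t)\le M(y_t)e^{\rho}$. Together these give $e^{-\rho}M(x_t)\le M(y_t)\le e^{\rho}M(x_t)$.

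\textbf{Distortion bound.} Write $z(s)=x_t+sh$ for $s\in[0,1]$. By the fundamental theorem of calculus,
\begin{equation*}
    \nabla f(y_t)-\nabla f(x_t)=\int_0^1\nabla^2 f(z(s))\,h\,ds .
\end{equation*}
Bounding the Hessian by the first admissibility condition gives
\begin{equation*}
    \norm{\nabla f(y_t)-\nabla f(x_t)}\le\norm{h}\int_0^1 M(z(s))\,ds .
\end{equation*}
The second condition gives $M(z(s))\le M(x_t)e^{\kappa s\norm{h}}$. Put $a=\kappa\norm{h}\le\rho$. If $\kappa>0$, then
\begin{equation*}
    \norm{\nabla f(y_t)-\nabla f(x_t)}\le\norm{h}\,M(x_t)\,\frac{e^{a}-1}{a}.
\end{equation*}
If $\kappa=0$, the factor $(e^a-1)/a$ is replaced by its limit $1$. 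Since $\norm{h}M(x_t)=\rho\norm{g}$, we obtain
\begin{equation*}
    D\le\rho\,\frac{e^{a}-1}{a}.
\end{equation*}

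\textbf{Main step.} The remaining work is to show $\rho\,(e^a-1)/a\le e^\rho-1$ for $0<a\le\rho$. The function $a\mapsto(e^a-1)/a$ is increasing on $(0,\infty)$, because its Taylor series $\sum_{k\ge0}a^k/(k+1)!$ has nonnegative coefficients. Hence $(e^a-1)/a\le(e^\rho-1)/\rho$ and so $D\le e^\rho-1$. In the case $\kappa=0$ we use instead $\rho\le e^\rho-1$. The case $\rho=0$ is trivial, since then $y_t=x_t$.

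The only delicate point is remembering to write the bound in terms of $a=\kappa\norm{h}$ rather than crudely bounding $e^{\kappa s\norm h}\le e^\rho$. The crude bound would only give $D\le\rho e^\rho$, which exceeds $e^\rho-1$ for $\rho>0$. Keeping the integral exact makes the monotonicity argument give the sharp constant $e^\rho-1$.
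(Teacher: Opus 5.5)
Your proposal is correct and follows the paper's argument essentially step for step. The paper proves the theorem by citing \Cref{lem:geometry}, which does exactly what you do: it gets $\kappa\norm{y_t-x_t}\le\rho$ from the third admissibility condition, applies the second condition in both directions, and bounds the distortion by $\rho\,(e^{a}-1)/a\le e^{\rho}-1$ using the integral estimate of \Cref{lem:local} together with the monotonicity of $u\mapsto(e^u-1)/u$.
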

Let us highlight what \Cref{thm:relative-control} says. The curvature scale at the perturbed point stays within a factor $e^{\pm\rho}$ of the scale at the iterate, and the relative gradient distortion never exceeds $e^\rho-1$. This is exactly the uniform control that \eqref{eq:heuristic-bounds-smooth}, \eqref{eq:heuristic-bounds-l0l1}, and \Cref{prop:endpoint-failures} show \ref{eq:sam} and \ref{eq:usam} lack: the bound does not degenerate as $\norm{\nabla f(x_t)}\to0$, nor as $\norm{\nabla f(x_t)}\to\infty$. Moreover, the bound depends neither on the admissibility constant $\kappa$ nor on which admissible scale is used, so the constant scale, the $(L_0,L_1)$ scale, and the $(H_0,H_1)$ scale all inherit it. In particular, taking $\delta=e^\rho-1$ in \Cref{sec:distortion}, $D_{\mathrm{CS\text{-}SAM}}<1$ whenever $\rho<\log2$, which is a necessary condition for the rates of \Cref{sec:rates}: the condition $\chi(\rho,\gamma)>0$ of \eqref{eq:constants} under which they hold forces $\rho<\log2$.

\section{Convergence rates}
\label{sec:rates}

In this section, we present convergence rates for \ref{eq:cssam} run with the curvature scales $M_L$ of \eqref{eq:assL} and $M_H$ of \eqref{eq:assH}. We first fix the constants and the function classes that appear in the bounds. Then, \Cref{sec:L} states the rates for the gradient scale $M_L$ and \Cref{sec:H} those for the loss scale $M_H$, each as one theorem with a nonconvex, a convex, and a PL item. The proofs can be found in \Cref{app:L,app:H}; the auxiliary lemmas they rely on are collected in \Cref{app:lemmas}.

\paragraph{Constants.}
For $u\ge0$ let $\omega(u):=(e^u-1-u)/u^2$ for $u>0$ and $\omega(0):=1/2$. Define
\begin{align}
    \chi(\rho,\gamma):=2-e^{\rho}-\gamma\,\omega(\gamma)\,e^{3\rho},
    \qquad
    c_0:=1-\frac{\rho\,(e^{\rho}-1)}{2\log 2-1}
    -\frac{\gamma e^{3\rho}}{2\,(2\log 2-1)}.
    \label{eq:constants}
\end{align}
The condition $\chi(\rho,\gamma)>0$ forces $\rho<\log 2$. All results below assume $\chi(\rho,\gamma)>0$; the convex results assume in addition $c_0\ge0$. The two conditions restrict how large the perturbation and the step may be taken. In the degenerate case $\rho=0$, that is, for \ref{eq:csgd}, the constants reduce to $\gamma\chi(0,\gamma)=1+2\gamma-e^{\gamma}$ and $c_0=1-\gamma/(2(2\log2-1))$.

\paragraph{Function classes.}
We say that $f$ is \emph{convex with a minimizer} if $f$ is convex and attains $f_\star$ at some $x_\star$. We say that $f$ satisfies the \emph{$\mu$-PL condition} for some $\mu>0$ if
\begin{align}
    \tfrac12\norm{\nabla f(x)}^2\ge\mu\bigl(f(x)-f_\star\bigr)
    \qquad\forall x\in\R^d.
    \label{eq:PL}
\end{align}

\subsection{\texorpdfstring{$(L_0,L_1)$ curvature scale: $M=M_L$}{(L0,L1) curvature scale: M = M\_L}}
\label{sec:L}

Throughout this subsection $f$ satisfies \eqref{eq:assL} and \ref{eq:cssam} is run with the curvature scale $M=M_L$, so that the perturbation step is $\rho/(L_0+L_1\norm{\nabla f(x_t)})$ and the descent step is $\gamma/(L_0+L_1\norm{\nabla f(y_t)})$. We collect the three rates in one statement.

\begin{theorem}
    \label{thm:L-rates}
    Let $f$ satisfy \eqref{eq:assL}, let $\rho\ge0$ and $\gamma>0$ satisfy $\chi(\rho,\gamma)>0$, and let $(x_t)_{t\ge0}$ be generated by \ref{eq:cssam} with $M=M_L$. Then, for every $T\ge1$, the following hold:
    \begin{enumerate}[label=(\roman*)]
        \item
        \textit{(Nonconvex rate)}
        \begin{align}
            \min_{0\le t<T}\norm{\nabla f(x_t)}^2
            \le\left(\sqrt{\frac{e^{\rho}L_0\,\bigl(f(x_0)-f_\star\bigr)}{\gamma\chi(\rho,\gamma)\,T}}
            +\frac{e^{\rho}L_1\,\bigl(f(x_0)-f_\star\bigr)}{\gamma\chi(\rho,\gamma)\,T}\right)^{2}.
            \label{eq:L-nonconvex}
        \end{align}
        \item
        \textit{(Convex rate)} If in addition $f$ is convex with a minimizer $x_\star$ and $c_0\ge0$, then
        \begin{align}
            f(x_T)-f_\star
            \le\frac{e^{\rho}\Bigl(L_0\norm{x_0-x_\star}^2
            +L_1\norm{x_0-x_\star}\bigl(f(x_0)-f_\star\bigr)\Bigr)}
            {\gamma\chi(\rho,\gamma)\,T}.
            \label{eq:L-convex}
        \end{align}
        \item
        \textit{(PL rate)} If in addition $f$ satisfies the $\mu$-PL condition \eqref{eq:PL}, then
        \begin{align}
            f(x_T)-f_\star
            \le\left(1-\frac{2\mu\gamma e^{-\rho}\chi(\rho,\gamma)}
            {L_0+L_1\sqrt{2\mu\,\bigl(f(x_0)-f_\star\bigr)}}\right)^{T}
            \bigl(f(x_0)-f_\star\bigr).
            \label{eq:L-PL-a}
        \end{align}
    \end{enumerate}
\end{theorem}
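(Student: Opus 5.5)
The plan is to derive a single one-step descent inequality for \ref{eq:cssam} with $M=M_L$ and then specialize it to the three settings. Write $g_t=\nabla f(x_t)$, $h_t=\nabla f(y_t)$, $M_t=M_L(x_t)$, and $\tilde M_t=M_L(y_t)$.

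\textbf{Step 1: one-step descent.} By \Cref{prop:admissible}(ii), $M_L$ is $L_1$-admissible. Along the segment from $x_t$ to $x_{t+1}$, the step has length $\gamma\norm{h_t}/\tilde M_t\le\gamma/L_1$. The admissibility property $M(z)\le \tilde M_t e^{L_1\norm{z-y_t}}$ then gives a curvature bound along that segment. I will use the standard generalized-smoothness descent lemma, which should be among the auxiliary lemmas of \Cref{app:lemmas}. It integrates the exponential growth of the curvature and produces the factor $\omega$. I intend to center the expansion at $x_t$ and bound the Hessian along $[x_t,x_{t+1}]$ by $M_t e^{L_1\cdot}$. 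This gives
\[
f(x_{t+1})\le f(x_t)-\tfrac{\gamma}{\tilde M_t}\ip{g_t}{h_t}+\omega(\gamma')\,M_t\,\tfrac{\gamma^2\norm{h_t}^2}{\tilde M_t^2},
\]
where $\gamma'$ bounds $L_1$ times the step length. Next I apply \Cref{thm:relative-control}, which gives $\ip{g_t}{h_t}\ge(2-e^\rho)\norm{g_t}^2$ (from $\norm{h_t-g_t}\le(e^\rho-1)\norm{g_t}$), $\norm{h_t}\le e^\rho\norm{g_t}$, and $e^{-\rho}M_t\le\tilde M_t\le e^\rho M_t$. Collecting the exponential factors, this should yield
\[
f(x_{t+1})\le f(x_t)-\frac{\gamma\,\chi(\rho,\gamma)}{e^{\rho}M_t}\norm{g_t}^2 .
\]
The power $e^{3\rho}$ attached to $\gamma\omega(\gamma)$ in $\chi$ indicates exactly this bookkeeping: one factor $e^{2\rho}$ from $\norm{h_t}^2$ and one $e^{\rho}$ from the scale ratio.

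The hardest part is arranging the factors so that the constant comes out exactly as $\chi$. I also have to make sure $\omega$ is evaluated at $\gamma$ rather than at $\gamma e^{\rho}$. For that, I plan to bound the step length using $\norm{h_t}\le\tilde M_t/L_1$, which is admissibility applied at $y_t$.

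\textbf{(i) Nonconvex rate.} I sum the descent inequality over $t<T$. This gives $\min_t \norm{g_t}^2/(L_0+L_1\norm{g_t})\le e^\rho\Delta_0/(\gamma\chi T)$, where $\Delta_0=f(x_0)-f_\star$. Since $s\mapsto s^2/(L_0+L_1s)$ is increasing, I solve the quadratic inequality $s^2\le A(L_0+L_1s)$. Its solution satisfies $s\le\sqrt{AL_0}+AL_1$, which yields \eqref{eq:L-nonconvex}.

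\textbf{(iii) PL rate.} The descent inequality shows that $f(x_t)$ is nonincreasing. I would like to upper-bound $\norm{g_t}$ in order to lower-bound the coefficient $\norm{g_t}^2/M_t$. The function $s^2/(L_0+L_1s)$ is increasing in $s$, so I insert PL in the form $s^2\ge2\mu\Delta_t$. The bound I want is $\norm{g_t}^2/M_t\ge 2\mu\Delta_t/(L_0+L_1\sqrt{2\mu\Delta_t})$. Monotonicity of $s^2/(L_0+L_1 s)$ lets me replace $s$ by the smaller value $\sqrt{2\mu\Delta_t}$. I then use $\Delta_t\le\Delta_0$ to make the denominator uniform, and iterating gives the contraction.

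\textbf{(ii) Convex rate.} I would follow the Lyapunov argument of \citet{gorbunov2025methods}. The extra condition $c_0\ge0$ and \Cref{prop:admissible}(iv) are what ensure $\norm{x_{t+1}-x_\star}\le\norm{x_t-x_\star}$: I expand the squared distance, use convexity $\ip{h_t}{y_t-x_\star}\ge f(y_t)-f_\star$, and absorb the perturbation term $\rho\ip{h_t}{g_t}/M_t$ and the squared step using \eqref{eq:L-implies-H}.

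With the distance monotone, convexity gives $\Delta_t\le\norm{g_t}\norm{x_0-x_\star}$. The descent inequality then reads $\Delta_{t+1}\le\Delta_t-c\,\Delta_t^2/(L_0R^2+L_1R\Delta_t)$, with $R=\norm{x_0-x_\star}$ and $c=\gamma\chi e^{-\rho}$. Since $\Delta_t\le\Delta_0$, this is at most $\Delta_t-c\Delta_t^2/(L_0R^2+L_1R\Delta_0)$. The usual reciprocal telescoping, $1/\Delta_{t+1}-1/\Delta_t\ge c/(L_0R^2+L_1R\Delta_0)$, then gives \eqref{eq:L-convex}. I expect the distance-monotonicity step under $c_0\ge0$ to be the most delicate point in (ii).
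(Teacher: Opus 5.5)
\textbf{Overall.} Your Step 1, items (i) and (iii), and the telescoping part of (ii) match the paper's proof. The one-step bound is \Cref{lem:descent}: expand at $x_t$ via \Cref{lem:local}, and control the step length by admissibility at $y_t$ so that $\omega$ is evaluated at $\gamma$. The rates then follow from monotonicity of $s\mapsto s^2/(L_0+L_1s)$ and from \Cref{lem:recursion}. In (iii), iterating $\Delta_{t+1}\le q\,\Delta_t$ also requires the contraction factor to satisfy $q\ge0$. This follows from the $t=0$ instance together with $\Delta_1\ge0$ and $\Delta_0>0$.

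\textbf{The gap: distance monotonicity in (ii).} After expanding the square you must show $2\ip{h_t}{x_t-x_\star}\ge\gamma\norm{h_t}^2/\tilde M_t$. Since $\norm{h_t}^2/\tilde M_t\le e^{3\rho}\norm{g_t}^2/M_t$, this requires a bound of the form $\norm{\nabla f(x)}^2/M_L(x)\le C\,(f(x)-f_\star)$.

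The inequality \eqref{eq:L-implies-H} that you cite cannot supply it, because it bounds $M_L$ from above by an affine function of the gap. For $L_1>0$ the most it yields is
\[
\frac{\norm{g}^2}{M_L}\le\frac{\norm{g}}{L_1}\le\frac{L_0}{2L_1^2}+\frac{3}{2(2\log2-1)}\,\bigl(f(x)-f_\star\bigr),
\]
whose additive constant does not vanish near $x_\star$. For $L_1=0$ it reduces to $L_0\le\tfrac32L_0$ and carries no information at all, yet the Fej\'er step is still needed there.

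\textbf{The missing lemma.} What you need is \Cref{lem:gap}, $f(x)-f_\star\ge(2\log2-1)\norm{\nabla f(x)}^2/M(x)$. It is obtained by applying \Cref{lem:local} at the trial point $x-\alpha\nabla f(x)/M(x)$, which is legitimate since admissibility gives $\kappa\norm{\alpha\nabla f(x)/M(x)}\le\alpha$. One then uses $f\ge f_\star$ and chooses $\alpha=\log2$. Note that \eqref{eq:L-implies-H} is derived from this lemma, not conversely.

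\textbf{Convexity at $x_t$.} To land exactly on $c_0$ you also need convexity at $x_t$, namely $f(y_t)\ge f(x_t)+\rho\norm{g_t}^2/M_t$. This turns the perturbation term into $\frac{\rho}{M_t}\ip{h_t-g_t}{g_t}\le\frac{\rho(e^\rho-1)}{M_t}\norm{g_t}^2$. If you absorb $\frac{\rho}{M_t}\ip{h_t}{g_t}$ directly by Cauchy--Schwarz, you get $\rho e^{\rho}$ in place of $\rho(e^\rho-1)$. That corresponds to a hypothesis strictly stronger than $c_0\ge0$.

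With these two ingredients you obtain \Cref{lem:fejer}, and the remainder of your (ii) goes through as written.
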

Let us read the three items in turn. Item (i) shows that the best squared gradient norm decays as $\mathcal{O}(1/T)$: expanding the square, the leading term is $e^{\rho}L_0(f(x_0)-f_\star)/(\gamma\chi(\rho,\gamma)T)$ and depends on $L_0$ alone, while the gradient-dependent constant $L_1$ enters only through terms of lower order in $T$. Note that the bound requires nothing beyond \eqref{eq:assL} and $\chi(\rho,\gamma)>0$, that is, no bounded gradients and no bounded iterates, and that running the method needs nothing beyond the constants $L_0,L_1$ that define the scale. Item (ii) gives an $\mathcal{O}(1/T)$ bound on the gap of the last iterate for convex objectives, with a leading term proportional to $L_0\norm{x_0-x_\star}^2$ and a second term proportional to $L_1\norm{x_0-x_\star}(f(x_0)-f_\star)$. The additional condition $c_0\ge0$ is what keeps the iterates within the initial distance of $x_\star$ (\Cref{lem:fejer}), and it is the only place where the second constant of \eqref{eq:constants} enters. Item (iii) gives a linear rate under the PL condition.

\paragraph{Comparison with gradient methods at $\rho=0$.}
Let us now compare our guarantees with the known ones in the degenerate case. For $\rho=0$, \ref{eq:cssam} with $M=M_L$ is the gradient descent with step $\gamma/(L_0+L_1\norm{\nabla f(x_t)})$, and the constant $e^{\rho}/(\gamma\chi(\rho,\gamma))$ in items (i) and (ii) equals $1/(1+2\gamma-e^{\gamma})$, which is $1/(2\log2-1)\approx2.59$ at $\gamma=\log2$. This is the method analyzed by \citet{vankov2025optimizing} and \citet{gorbunov2025methods}, and \Cref{thm:L-rates}(i)--(ii) give the same rates as Theorems~3.1 and 3.2 of \citet{vankov2025optimizing}: the best squared gradient norm decays as $1/T$ with the $L_1$ term of lower order, and the gap decays as $1/T$ with an $L_1\norm{x_0-x_\star}$ term. Our analysis therefore recovers the known asymptotic rate orders in $T$ of the curvature-scaled gradient method as the special case $\rho=0$, and extends them to the SAM setting. To the best of our knowledge, \Cref{thm:L-rates} provides the first convergence rates for a SAM-type method under the $(L_0,L_1)$ curvature scale.

\subsection{\texorpdfstring{$(H_0,H_1)$ curvature scale: $M=M_H$}{(H0,H1) curvature scale: M = M\_H}}
\label{sec:H}

Throughout this subsection $f$ satisfies \eqref{eq:assH} and is nonconstant, so that $M_H>0$ everywhere by \Cref{prop:admissible}(iii), and \ref{eq:cssam} is run with the curvature scale $M=M_H$: the perturbation step is $\rho/(H_0+H_1(f(x_t)-f_\star))$ and the descent step is $\gamma/(H_0+H_1(f(y_t)-f_\star))$. Note that the loss scale is evaluated relative to $f_\star$, which the method therefore needs, in contrast to the gradient scale of \Cref{sec:L}.

\begin{theorem}
    \label{thm:H-rates}
    Let $f$ satisfy \eqref{eq:assH} and be nonconstant, let $\rho\ge0$ and $\gamma>0$ satisfy $\chi(\rho,\gamma)>0$, and let $(x_t)_{t\ge0}$ be generated by \ref{eq:cssam} with $M=M_H$. Then, for every $T\ge1$, the following hold:
    \begin{enumerate}[label=(\roman*)]
        \item
        \textit{(Nonconvex rate)}
        \begin{align}
            \min_{0\le t<T}\norm{\nabla f(x_t)}^2
            \le\frac{e^{\rho}\bigl(H_0+H_1(f(x_0)-f_\star)\bigr)\bigl(f(x_0)-f_\star\bigr)}
            {\gamma\chi(\rho,\gamma)\,T}.
            \label{eq:H-nonconvex}
        \end{align}
        \item
        \textit{(Convex rate)} If in addition $f$ is convex with a minimizer $x_\star$ and $c_0\ge0$, then
        \begin{align}
            f(x_T)-f_\star
            \le\frac{e^{\rho}\bigl(H_0+H_1(f(x_0)-f_\star)\bigr)\norm{x_0-x_\star}^2}
            {\gamma\chi(\rho,\gamma)\,T}.
            \label{eq:H-convex}
        \end{align}
        \item
        \textit{(PL rate)} If in addition $f$ satisfies the $\mu$-PL condition \eqref{eq:PL}, then
        \begin{align}
            f(x_T)-f_\star
            \le\left(1-\frac{2\mu\gamma e^{-\rho}\chi(\rho,\gamma)}
            {H_0+H_1\bigl(f(x_0)-f_\star\bigr)}\right)^{T}
            \bigl(f(x_0)-f_\star\bigr).
            \label{eq:H-PL-a}
        \end{align}
    \end{enumerate}
\end{theorem}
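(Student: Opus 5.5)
The plan is to reduce all three items to a single one-step descent inequality for \ref{eq:cssam} with an arbitrary $\kappa$-admissible scale $M$:
\begin{equation*}
    f(x_{t+1})\le f(x_t)-\frac{\gamma e^{-\rho}\chi(\rho,\gamma)}{M(x_t)}\norm{\nabla f(x_t)}^2 .
\end{equation*}
Since $M_H$ is $\sqrt{H_1}$-admissible for nonconstant $f$ by \Cref{prop:admissible}(iii), this inequality applies to $M=M_H$. The proof of \Cref{thm:L-rates} would use the same inequality, so it should be an auxiliary lemma shared by both theorems.

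\textbf{The one-step inequality.} Write $g=\nabla f(x_t)$, $\tilde g=\nabla f(y_t)$ and $s=\gamma\tilde g/M(y_t)$, so that $x_{t+1}=x_t-s$.
\begin{enumerate}[label=(\alph*)]
    \item Along the segment $z_\tau=x_t-\tau s$, $\tau\in[0,1]$, admissibility gives $\norm{\nabla^2 f(z_\tau)}\le M(z_\tau)\le M(x_t)e^{\kappa\tau\norm{s}}$.
    \item Using $\kappa\norm{\tilde g}\le M(y_t)$, we get $\kappa\norm{s}\le\gamma$.
    \item Integrating Taylor's formula twice then gives
    \begin{equation*}
        f(x_{t+1})\le f(x_t)-\ip{g}{s}+M(x_t)\norm{s}^2\omega(\kappa\norm{s})\le f(x_t)-\ip{g}{s}+M(x_t)\norm{s}^2\omega(\gamma),
    \end{equation*}
    since $\omega$ is increasing.
\end{enumerate}
We then apply \Cref{thm:relative-control}:
\begin{itemize}
    \item $\ip{g}{\tilde g}\ge(2-e^\rho)\norm{g}^2$;
    \item $\norm{\tilde g}\le e^\rho\norm{g}$;
    \item $e^{-\rho}M(x_t)\le M(y_t)\le e^\rho M(x_t)$.
\end{itemize}
With these, the linear term is at least $\gamma(2-e^\rho)\norm{g}^2/(e^{\rho}M(x_t))$. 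The quadratic term is at most $\gamma^2\omega(\gamma)e^{2\rho}\norm{g}^2 e^{2\rho}/M(x_t)$. Tracking the exponents carefully so they collect to $e^{-\rho}\chi$ is the delicate bookkeeping. If the powers do not match exactly, I would bound the linear term using the $M(y_t)$ in the denominator directly instead of converting it first. I expect this lemma to be the main obstacle.

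\textbf{Items (i) and (iii).} The inequality makes $f(x_t)$ nonincreasing, so $M_H(x_t)\le M_H(x_0)$ for all $t$.
\begin{itemize}
    \item For (i), sum over $t<T$, telescope, and use $f(x_T)\ge f_\star$ and $\min\le$ average. This gives \eqref{eq:H-nonconvex}.
    \item For (iii), insert $\norm{\nabla f(x_t)}^2\ge2\mu(f(x_t)-f_\star)$ together with $M_H(x_t)\le M_H(x_0)$. This gives the contraction factor in \eqref{eq:H-PL-a}, and we iterate.
\end{itemize}

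\textbf{Item (ii).} Under convexity and $c_0\ge0$, I would invoke \Cref{lem:fejer} to get $\norm{x_t-x_\star}\le R:=\norm{x_0-x_\star}$ for all $t$. That lemma expands $\norm{x_{t+1}-x_\star}^2$ and uses $\ip{\tilde g}{y_t-x_\star}\ge f(y_t)-f_\star$ together with the self-bound $\norm{\nabla f}^2\lesssim M(f-f_\star)$ implicit in the constants $2\log2-1$ of $c_0$. Convexity then gives $\delta_t:=f(x_t)-f_\star\le\norm{\nabla f(x_t)}R$. With $a:=\gamma e^{-\rho}\chi/M_H(x_0)$, the descent inequality yields $\delta_{t+1}\le\delta_t-a\delta_t^2/R^2$. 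Dividing by $\delta_t\delta_{t+1}$ and using $\delta_{t+1}\le\delta_t$ gives $1/\delta_{t+1}\ge1/\delta_t+a/R^2$. Hence $\delta_T\le R^2/(aT)$, which is exactly \eqref{eq:H-convex}.
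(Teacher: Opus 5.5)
Your proposal is essentially the paper's proof: the one-step inequality is \Cref{lem:descent}, built from \Cref{lem:local} and \Cref{lem:geometry}, and items (i)--(iii) then follow exactly as you outline, via $M_H(x_t)\le M_H(x_0)$, telescoping, the PL inequality, and \Cref{lem:fejer} together with the recursion of \Cref{lem:recursion}. On the bookkeeping you flagged, the fallback is necessary rather than optional: converting to $M(x_t)$ first gives the coefficient $\gamma e^{-\rho}\bigl(2-e^{\rho}-\gamma\omega(\gamma)e^{5\rho}\bigr)$, which $\chi(\rho,\gamma)>0$ does not make positive; the working version factors $\gamma/M(y_t)$ out of both terms, using $M(x_t)/M(y_t)^2\le e^{\rho}/M(y_t)$ on the quadratic term to get exactly $\chi(\rho,\gamma)\gamma\norm{\nabla f(x_t)}^2/M(y_t)$, and only then applies $M(y_t)\le e^{\rho}M(x_t)$, which is legitimate because $\chi(\rho,\gamma)>0$.
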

The three items have the same shape as in \Cref{thm:L-rates}, with one difference worth pointing out: the dependence on the curvature parameters is different. Item (i) bounds the best squared gradient norm by $\mathcal{O}(1/T)$ with a constant proportional to $(H_0+H_1(f(x_0)-f_\star))(f(x_0)-f_\star)$. Thus, unlike the $L_1$ contribution in \Cref{thm:L-rates}(i), the $H_1$ contribution occurs at the leading $1/T$ order. Item (ii) gives an $\mathcal{O}(1/T)$ bound on the last-iterate gap for convex objectives, proportional to $(H_0+H_1(f(x_0)-f_\star))\norm{x_0-x_\star}^2$, again under $c_0\ge0$, which bounds the iterates as in \Cref{sec:L}. Item (iii) gives a linear rate under the PL condition, with contraction factor $1-2\mu\gamma e^{-\rho}\chi(\rho,\gamma)/(H_0+H_1(f(x_0)-f_\star))$. 

\paragraph{Comparison with gradient methods at $\rho=0$.}
For $\rho=0$, \ref{eq:cssam} with $M=M_H$ is gradient descent with the step $\gamma/(H_0+H_1(f(x_t)-f_\star))$, and, as in \Cref{sec:L}, the constant $e^{\rho}/(\gamma\chi(\rho,\gamma))$ in items (i) and (ii) equals $1/(1+2\gamma-e^{\gamma})$, which is $1/(2\log2-1)\approx2.59$ at $\gamma=\log2$. This kind of loss-dependent step was used by \citet{alimisis2026warmup}, who took $1/(10H_0+20H_1(f(x_t)-f_\star))$, and \Cref{thm:H-rates}(ii)--(iii) give the same rates as their Theorems~4.2 and 4.3, namely a $1/T$ gap in the convex case and a linear rate under the PL condition, with a larger step at $\gamma=\log2$: their denominator coefficients $10$ and $20$ are both replaced by $1/\log2\approx1.44$. At $\rho=0$ our guarantees thus match the known rates for gradient descent with a loss-dependent step and allow larger step sizes, and they extend to any positive perturbation with $\chi(\rho,\gamma)>0$. To the best of our knowledge, \Cref{thm:H-rates} provides the first convergence rates for a SAM-type method under the $(H_0,H_1)$ curvature scale. 

\begin{remark}
    As we show in \Cref{prop:admissible}(iv), every objective satisfying \eqref{eq:assL} also satisfies \eqref{eq:assH} with $H_0=3L_0/2$ and $H_1=3L_1^2/(2(2\log2-1))$, so one could ask why we treat the two scales separately. Note that \ref{eq:cssam} with $M=M_L$ and \ref{eq:cssam} with $M=M_H$ are the same algorithm run with different steps: while one assumption implies the other, the perturbation step and the descent step are scaled by $M_L$ in one case and by $M_H$ in the other. \Cref{thm:L-rates,thm:H-rates} are therefore guarantees for two different step choices. 
\end{remark}

\section{Numerical experiments}
\label{sec:experiments}

In this section, we first verify the results of \Cref{sec:failures} on the two one-dimensional objectives of \Cref{prop:endpoint-failures} (\Cref{sec:verification}), and we then test \ref{eq:cssam} on the training of deep neural networks against tuned \ref{eq:sam} and \ref{eq:usam} baselines (\Cref{sec:dnn}). The complete protocols are given in \Cref{app:exps}.

\subsection{Verification of \texorpdfstring{\Cref{sec:failures}}{Section 2}}
\label{sec:verification}

The experiments of this subsection are deterministic and one-dimensional, with exact function values and gradients. We use the quadratic $f(x)=x^2/2$ and the quartic $f(x)=x^4/4$ of \Cref{prop:endpoint-failures}. The two scales coincide on the quadratic, $M_L=M_H\equiv1$, while on the quartic $M_L(x)=4+|x|^3$ and $M_H(x)=3+3x^4/4$, that is, the constants $(L_0,L_1)=(4,1)$ of \Cref{prop:endpoint-failures}(ii) and $(H_0,H_1)=(3,3)$; both bound the curvature $3x^2$ at every $x$ (\Cref{app:verification-details}). Throughout, \ref{eq:cssam} is run with $\rho=\gamma=1/8$, for which $\chi(\rho,\gamma)>0$ and $c_0\ge0$. The results are collected in \Cref{fig:verification}.

\paragraph{Two-cycle and divergence.}
We first run the three methods on the two objectives. On the quadratic, \ref{eq:sam} with $r=1$ and $\eta=0.2$ is started at $x_0=3$; by \Cref{prop:endpoint-failures}(i), its iterates approach the two-cycle $\pm a$ with $a=\eta r/(2-\eta)=1/9$. On the quartic, \ref{eq:usam} with $\rho=1/8$ and $\eta=1/32$ is started at $x_0=4$, which exceeds the threshold $(2/(\eta\rho^3))^{1/8}\approx3.67$ of \Cref{prop:endpoint-failures}(ii). \Cref{fig:endpoint-quadratic,fig:endpoint-quartic} show the iterates. We observe that \ref{eq:sam} oscillates between $\pm1/9$ (dashed lines) and does not converge, and that \ref{eq:usam} changes sign and grows at every step, reaching $|x_2|\approx10^{11}$ after two updates, at which point we stop the run. In contrast, \ref{eq:cssam} converges to the minimizer on both objectives, with either scale.

\paragraph{Uniform distortion control.}
We next evaluate the relative gradient distortion \eqref{eq:distortion} of each perturbation as a function of the gradient norm, no outer step enters this measurement. Here \ref{eq:sam} uses $r=1/8$ on the quadratic and \ref{eq:usam} uses $\rho=1/8$ on the quartic, so that all three perturbations share the numerical parameter $1/8$. \Cref{fig:distortion-quadratic,fig:distortion-quartic} show the distortion together with the bound $e^{\rho}-1\approx0.133$ of \Cref{thm:relative-control}. We observe the two failures of \Cref{sec:distortion}: the distortion of \ref{eq:sam} equals $r/\norm{\nabla f(x)}$ and grows without bound as the gradient vanishes, while that of \ref{eq:usam} equals $(1+\rho x^2)^3-1$ and grows without bound as the gradient grows. The distortion of \ref{eq:cssam} stays below $e^{\rho}-1$ at every gradient norm and for both scales, as \Cref{thm:relative-control} guarantees. Note that on the quadratic it equals $\rho$ exactly, and on the quartic it peaks below the bound and decays at both ends.

\begin{figure}[t]
    \centering
    \begin{subfigure}[t]{0.24\linewidth}
        \includegraphics[width=\linewidth]{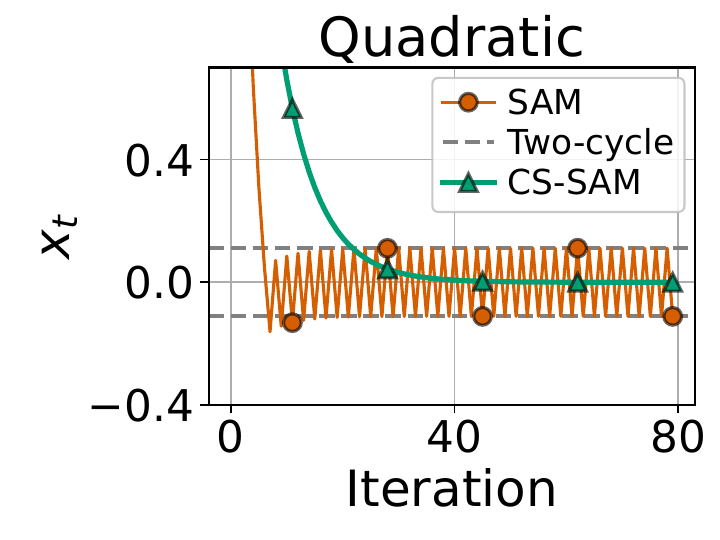}
        \caption{Quadratic: iterates}\label{fig:endpoint-quadratic}
    \end{subfigure}\hfill
    \begin{subfigure}[t]{0.24\linewidth}
        \includegraphics[width=\linewidth]{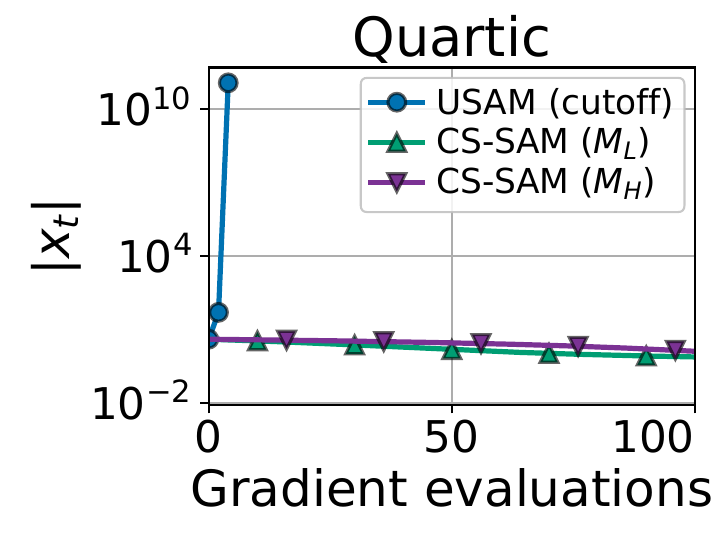}
        \caption{Quartic: $|x_t|$}\label{fig:endpoint-quartic}
    \end{subfigure}\hfill
    \begin{subfigure}[t]{0.24\linewidth}
        \includegraphics[width=\linewidth]{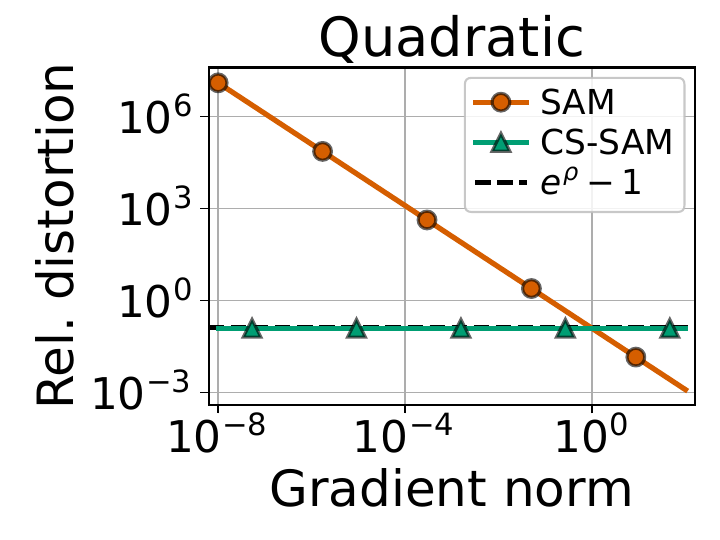}
        \caption{Quadratic: distortion}\label{fig:distortion-quadratic}
    \end{subfigure}\hfill
    \begin{subfigure}[t]{0.24\linewidth}
        \includegraphics[width=\linewidth]{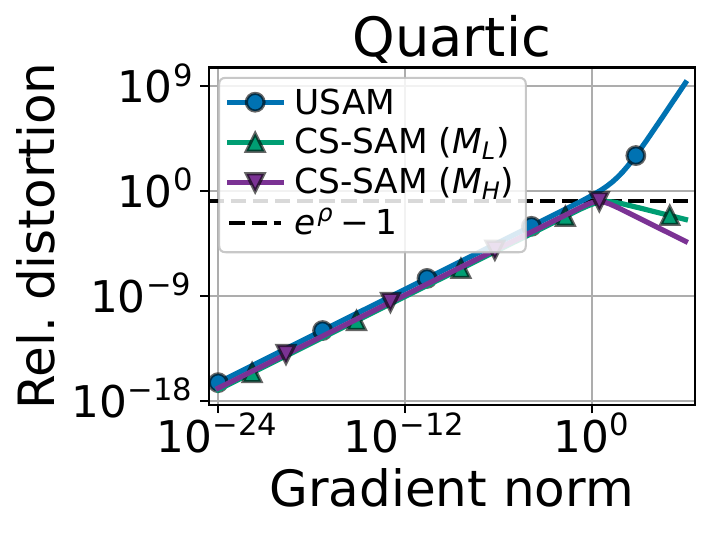}
        \caption{Quartic: distortion}\label{fig:distortion-quartic}
    \end{subfigure}
    \caption{Verification of \Cref{sec:failures} on the objectives of \Cref{prop:endpoint-failures}. (a) Iterates of \ref{eq:sam} on $f(x)=x^2/2$ against the predicted two-cycle (dashed), with \ref{eq:cssam}. (b) $|x_t|$ of \ref{eq:usam} on $f(x)=x^4/4$ against the number of gradient evaluations, with \ref{eq:cssam} for both scales. (c)--(d) Relative gradient distortion \eqref{eq:distortion} against the gradient norm and the bound $e^{\rho}-1$ of \Cref{thm:relative-control}.}
    \label{fig:verification}
\end{figure}

\subsection{DNN experiments}
\label{sec:dnn}

In this subsection, we test \ref{eq:cssam} on the training of deep neural networks and compare it with tuned \ref{eq:sam} and \ref{eq:usam} baselines. We go beyond the deterministic setting of \Cref{sec:verification} and of our guarantees: the networks are trained with minibatch gradients, and the curvature scales are evaluated on minibatch quantities. The experiments are designed to test whether the two scales are competitive in practice. The complete grids and the per-run configurations are given in \Cref{app:dnn-details}.

\medskip
\noindent
We train ResNet-20 and ResNet-32 with batch normalization on CIFAR-10 and CIFAR-100 for 200 epochs with batch size 128, random-crop and horizontal-flip augmentation without cutout, and the cross-entropy loss. Momentum and weight decay are $0$ for every method, so that the comparison isolates the update rules. In the minibatch setting the $M_L$ \emph{scale} is $L_0+L_1\norm{\nabla f_i(x)}$, and the $M_H$ \emph{scale} is $H_0+H_1[f_i(x)-\ell_i^\star]_+$, where $\ell_i^\star$ is a lower bound of $f_i$ and $[z]_+=\max\{z,0\}$. We set $\ell_i^\star=0$ in all our expriments. We compare against \ref{eq:sam} and \ref{eq:usam}, each with a constant and with a cosine annealing schedule, tuned on a $4\times4$ grid of learning rate $\times$ radius (respectively multiplier) for every model--dataset cell. For \ref{eq:cssam} we tune the $M_L$ scale over $\rho\in\{0.2,0.4,0.8,1.6\}$, $\gamma\in\{0.2,0.4,0.8\}$, $L_0\in\{0.02,0.5,2\}$, and the $M_H$ scale over $\rho,\gamma\in\{0.3,1,3\}$, $H_1\in\{1,3,10,30\}$. We run each experiment for 200 epochs. \Cref{tab:cifar} reports, for each method and cell, the mean and standard deviation over the three seeds of the best test accuracy. We observe that both curvature scales are competitive: CS-SAM with $M_H$ achieves the highest mean best-test accuracy in all four settings, while CS-SAM with $M_L$ beats tuned SAM and USAM in three out of four settings. 

\begin{table}[t]
    \centering
    \caption{Best test accuracy (\%, maximum over 200 epochs), mean $\pm$ standard deviation over 3 seeds, for ResNet-20/32 trained on CIFAR-10/100. Bold marks the best mean in each column.}
    \label{tab:cifar}
    \resizebox{\linewidth}{!}{%
    \begin{tabular}{l cccc}
    \toprule
    Method & ResNet-20 / CIFAR-10 & ResNet-32 / CIFAR-10 & ResNet-20 / CIFAR-100 & ResNet-32 / CIFAR-100 \\
    \midrule
    \ref{eq:sam}, constant lr & $90.60_{\pm 0.20}$ & $91.24_{\pm 0.08}$ & $64.66_{\pm 0.61}$ & $65.55_{\pm 0.62}$ \\
    \addlinespace
    \ref{eq:sam}, cosine & $90.49_{\pm 0.16}$ & $91.16_{\pm 0.23}$ & $65.32_{\pm 0.21}$ & $66.07_{\pm 0.69}$ \\
    \addlinespace
    \ref{eq:usam}, constant lr & $90.15_{\pm 0.30}$ & $90.91_{\pm 0.11}$ & $64.73_{\pm 0.12}$ & $65.61_{\pm 0.40}$ \\
    \addlinespace
    \ref{eq:usam}, cosine & $90.13_{\pm 0.41}$ & $90.84_{\pm 0.19}$ & $65.33_{\pm 0.13}$ & $66.77_{\pm 0.34}$ \\
    \addlinespace
    \ref{eq:cssam} with $M_L$ & $90.87_{\pm 0.30}$ & $91.72_{\pm 0.33}$ & $65.18_{\pm 0.42}$ & $66.80_{\pm 0.59}$ \\
    \addlinespace
    \ref{eq:cssam} with $M_H$ & $\mathbf{91.01_{\pm 0.06}}$ & $\mathbf{91.73_{\pm 0.22}}$ & $\mathbf{65.68_{\pm 0.34}}$ & $\mathbf{67.06_{\pm 0.44}}$ \\
    \bottomrule
    \end{tabular}
    }
\end{table}

\section{Conclusion}
\label{sec:conc}

In this work, we studied sharpness-aware minimization under generalized smoothness through the relative gradient distortion. We showed that the distortion of \ref{eq:sam} and \ref{eq:usam} can become unbounded at opposite ends of the gradient scale, derived a necessary condition for uniform distortion control, and introduced \ref{eq:cssam}, which scales the perturbation and the descent step by the \emph{same} admissible curvature scale and keeps the distortion bounded. Under $(L_0,L_1)$- and $(H_0,H_1)$-smoothness, we established $\mathcal{O}(1/T)$ rates for the best squared gradient norm on nonconvex objectives and for the last-iterate gap on convex objectives, and linear convergence under the PL condition, recovering the known gradient descent rates for $\rho=0$. Our experiments confirmed the two failures and the distortion bound on one-dimensional objectives, and showed that both curvature scales are competitive with tuned \ref{eq:sam} and \ref{eq:usam} on ResNet-20/32 and CIFAR-10/100, beyond the deterministic setting of our theory. Future directions include extending the analysis to the stochastic setting, in which the curvature scales are evaluated on minibatch quantities as in our experiments, removing the dependence of the loss scale on $f_\star$, and combining \ref{eq:cssam} with momentum and weight decay in larger-scale training.

\section*{AI use statement}
We used generative AI tools to suggest proof ideas, which the authors checked and wrote up, and to streamline and edit the wording of some paragraphs and sentences. We take responsibility for the final content of this work.

\section*{Acknowledgments}
The authors’ contribution to this work was supported by NSF CAREER 2542902, and AFOSR YIP FA9550-26-1-B353.

\bibliographystyle{plainnat}
\bibliography{refs}

\newpage
\appendix
\crefalias{section}{appendix}
\crefalias{subsection}{appendix}

\newenvironment{restatement}[1]{%
    \begin{mdframed}[skipabove=8pt,skipbelow=8pt,hidealllines=true,backgroundcolor=shadecolor,innerleftmargin=4pt,innerrightmargin=4pt]%
    \noindent{\bfseries\Cref{#1}} (restated){\bfseries.}\hspace{0.5em}\ignorespaces}{\end{mdframed}}

\part*{Supplementary Material}

The appendix is organized as follows. In \Cref{app:related}, we give a more detailed discussion of related work. In \Cref{app:lemmas}, we collect the auxiliary lemmas on admissible curvature scales and on a single iteration of \ref{eq:cssam} that all proofs rely on. \Cref{app:endpoint} presents the proofs for \Cref{sec:failures}: the verification of the distortion bounds \eqref{eq:heuristic-bounds-smooth} and \eqref{eq:heuristic-bounds-l0l1}, and the proofs of \Cref{prop:endpoint-failures}, \Cref{thm:necessary-scaling}, \Cref{prop:admissible}, and \Cref{thm:relative-control}. \Cref{app:rates} presents the proofs for \Cref{sec:rates}, with \Cref{app:L} for the $(L_0,L_1)$ curvature scale (\Cref{thm:L-rates}) and \Cref{app:H} for the $(H_0,H_1)$ curvature scale (\Cref{thm:H-rates}). Finally, \Cref{app:exps} gives the experimental details for \Cref{sec:experiments}.

{\small\tableofcontents}

\newpage
\section{Further Related Work}
\label{app:related}

In this section, we complement the discussion of related work in \Cref{sec:intro} with a more detailed survey. We proceed from the broader literature on the dynamics of \ref{eq:sam} and on adaptive perturbation rules to the lines closest to our work: relative control of the perturbation-induced gradient error, optimization under the curvature models \eqref{eq:assL} and \eqref{eq:assH}, and relative gradient models. We close each paragraph with a brief comparison with our results.

\paragraph{SAM, normalization, and optimization dynamics.}
Sharpness-Aware Minimization replaces the gradient at the current iterate with a gradient evaluated after a parameter perturbation, motivated by minimizing the loss over a fixed local neighborhood \citep{foret2021sam}. Subsequent work has shown that its behavior cannot be understood solely as ordinary descent on a smooth objective. \citet{andriushchenko2022understanding} studied the implicit bias of \ref{eq:sam} and the role of stochastic sharpness, while \citet{dai2023normalization} identified stabilization and continued motion along manifolds of minima as important effects of the gradient normalization. \citet{bartlett2022dynamics} showed that fixed-radius \ref{eq:sam} typically approaches a two-cycle on convex quadratics and that analogous oscillations can induce drift toward regions with smaller Hessian spectral norm on nonquadratic landscapes. \citet{si2023practical} established unavoidable stationarity neighborhoods for practical fixed-radius \ref{eq:sam} in several settings, and \citet{park2026hallucinate} showed that \ref{eq:sam} can converge to points at which the perturbed-point gradient vanishes even though the original gradient does not. Our quadratic construction in \Cref{prop:endpoint-failures}(i) should therefore be viewed as an exact illustration of established late-stage phenomena. Our new point is that the low-gradient failure of \ref{eq:sam} and the high-gradient amplification of \ref{eq:usam} are complementary consequences of expressing the perturbation budget in inappropriate units, which \Cref{thm:necessary-scaling} makes precise.

\paragraph{Unified and adaptive perturbation rules.}
Unified SAM \citep{oikonomou2025general} interpolates between \ref{eq:sam} and \ref{eq:usam} through a fixed normalization rule and provides deterministic and stochastic convergence guarantees under several sampling and step-size choices. Other methods adapt the perturbation: ASAM \citep{kwon2021asam} rescales the parameter-space neighborhood to obtain scale invariance, whereas Fisher SAM \citep{kim2022fishersam} replaces the Euclidean ball by a Fisher-information geometry. LETS \citep{wang2024lets} learns the perturbation radius through a bilevel training--validation objective, LightSAM \citep{cheng2025lightsam} adapts the perturbation radius and the outer learning rate using AdaGrad- and Adam-style statistics, and LE-SAM \citep{wang2026lesam} replaces the fixed parameter-space radius by a fixed loss-space budget. More recent methods adapt the radius from the observed Taylor-approximation error \citep{deng2026tlcsam} or redistribute a fixed global radius across parameter blocks using moving averages of blockwise gradient energy \citep{huang2026gearsam}. Polyak-type SAM schedulers instead adapt the outer step from function values under classical smoothness \citep{oikonomou2026polyak}. In contrast, \ref{eq:cssam} adapts the perturbation through one scalar quantity, an upper bound on the local curvature (\Cref{def:admissible}), rather than through validation performance, optimizer statistics, parameter scale, blockwise gradient energy, or a loss-space budget. Moreover, the scaling of the perturbation is derived rather than chosen: it is motivated by the necessary condition of \Cref{thm:necessary-scaling} for uniformly bounded relative gradient distortion, and \Cref{thm:relative-control} shows that it meets this requirement for every admissible scale. Finally, the same curvature scale sets the outer step, so that a single quantity controls both the perturbation and the descent step.

\paragraph{Relative-error control and exact stationarity.}
Inexact-gradient analysis provides a natural framework for studying gradients evaluated at perturbed points. \citet{khanh2024fundamental} used this perspective to establish convergence results for several normalized and unnormalized SAM variants, including conditions that control the perturbation-induced gradient error. \citet{adly2026stationarity} analyzed the deterministic family
\begin{equation*}
    y_t=x_t+\rho\,\frac{\nabla f(x_t)}{\norm{\nabla f(x_t)}^{\alpha}},
    \qquad 0\le\alpha\le1,
\end{equation*}
under classical smoothness \eqref{eq:assSmooth}. They identified stationarity floors for constant-parameter rules and proposed a clipped perturbation that agrees with the original rule away from stationarity but becomes gradient-proportional near stationary points, yielding $\norm{\nabla f(x_t)}\to0$. Relative control of the gradient error is therefore not new by itself. Our contribution is to derive the scaling required to enforce such control uniformly under gradient-dependent curvature: by \Cref{thm:necessary-scaling}, every perturbation in the gradient direction with uniformly bounded distortion over the class \eqref{eq:assL} must be \ref{eq:usam}-like near stationarity and \ref{eq:sam}-like at large gradients. The curvature-scaled perturbation of \ref{eq:cssam} realizes both regimes and couples the resulting inner perturbation to a curvature-scaled outer step.

\paragraph{Optimization beyond uniform smoothness.}
The $(L_0,L_1)$-smoothness model \eqref{eq:assL} was introduced to describe objectives whose local curvature can grow with the gradient norm and to explain the effectiveness of clipping and normalized gradients \citep{zhang2020clipping}. Subsequent work developed sharper clipping, acceleration, normalization, and adaptive methods for this class \citep{zhang2020improved,crawshaw2022robustness,chen2023generalized,koloskova2023revisiting,li2023convex,gorbunov2025methods,vankov2025optimizing}. Curvature-scaled ordinary gradient steps are therefore established machinery rather than a contribution of this paper. The same model has also been extended to variational inequalities and root-finding problems, with adaptive-stepsize and extragradient-type methods \citep{vankov2024generalized, yoon2026polyak, choudhury2025extragradient,greenberg2026extragradient}. The loss-dependent condition \eqref{eq:assH} has likewise been used to explain learning-rate warm-up \citep{alimisis2026warmup} and to analyze line search, steepest descent, and adaptive methods \citep{vaswani2025armijo,vaswani2026nonuniform}. Our contribution is to use the same machinery for the perturbation: in \ref{eq:cssam}, the curvature scale that sets the descent step also sets the perturbation, and \Cref{thm:relative-control} shows that this keeps the relative gradient distortion uniformly bounded. The resulting rates have the same asymptotic orders in $T$ as those of curvature-scaled gradient descent, with different dependence on the curvature parameters and initial conditions, and they recover the known gradient-descent rates at $\rho=0$ (\Cref{sec:rates}). 

\paragraph{Relative gradient models and curvature geometry.}
Gradient-normalized smoothness characterizes the neighborhood in which the gradient admits a reliable relative approximation, and uses it to globalize methods based on approximate Hessians \citep{semenov2026gns}. We specialize this relative-locality principle to a first-order sharpness-aware perturbation, with an explicit rule based on a scalar curvature scale and a necessity result for perturbations in the gradient direction (\Cref{thm:necessary-scaling}). The exponential local models of \eqref{eq:admissible} are also reminiscent of generalized self-concordance, where controlled curvature growth replaces a uniform Hessian bound \citep{sun2019generalized}; unlike that convex, Newton-oriented framework, our analysis certifies a two-gradient SAM update on nonconvex objectives.

\paragraph{Geometry, generalization, and scope.}
Parameter-space sharpness is sensitive to neural-network reparameterization, and empirical evaluations show that no single sharpness measure universally predicts generalization \citep{kwon2021asam,andriushchenko2023modern}. Fisher SAM and $\mu$P$^2$ further illustrate that information-geometric or layerwise perturbation scaling can be important in modern networks \citep{kim2022fishersam,haas2024mup2}. Moreover, late-stage SAM can be especially effective for selecting flatter minima \citep{zhou2025late}, while standard parameter-space SAM can optimize spurious notions of sharpness in language modeling \citep{singh2025spurious}. \ref{eq:cssam} should therefore be viewed as an optimization method rather than a parameterization-invariant theory of generalization. The present analysis is deterministic, assumes exact gradients and function values, and uses a global Euclidean scalar curvature scale.

\newpage
\section{Auxiliary lemmas}
\label{app:lemmas}

In this section, we collect the lemmas used in \Cref{app:endpoint,app:rates}. All of them are stated for a general $\kappa$-admissible scale $M$ (\Cref{def:admissible}), with three exceptions: \Cref{lem:gronwall} is the differential form of Gr\"onwall's inequality, \Cref{lem:selfbound,lem:zeroscale} concern the loss scale $M_H$ of \eqref{eq:assH} and \Cref{lem:recursion} is an elementary fact about sequences. 

\paragraph{Conventions for the proofs.}
The constants $\chi(\rho,\gamma)$ and $c_0$ are defined in \eqref{eq:constants}. If the method stops at some $x_t$, then $\nabla f(x_t)=0$: the best gradient norm is then zero, and under convexity or the PL condition $x_t$ is a global minimizer, so every bound below holds trivially from that index on. We therefore assume in all proofs that $\nabla f(x_t)\neq0$ for every $t$. Note that $\chi(\rho,\gamma)>0$ implies $2-e^{\rho}>\gamma\omega(\gamma)e^{3\rho}>0$, hence $\rho<\log2$ and $\gamma e^{-\rho}\chi(\rho,\gamma)>0$. Finally, recall that $\omega(u)=\sum_{k\ge0}u^k/(k+2)!$, so $\omega$ is increasing on $[0,\infty)$, and that
\begin{align}
    u+u^2\omega(u)=e^u-1,\qquad u-u^2\omega(u)=1+2u-e^u\qquad(u\ge0).
    \label{eq:omega-identities}
\end{align}

\medskip
\noindent
We begin with the differential form of Gr\"onwall's inequality \citep{gronwall1919note}, stated for absolutely continuous functions because the gradient scale $M_L$ is only Lipschitz along a segment. We include the short proof for completeness.

\begin{lemma}[Gr\"onwall's inequality; see \citealp{gronwall1919note}]
    \label{lem:gronwall}
    Let $c\ge0$ and let $g:[0,1]\to[0,\infty)$ be absolutely continuous with
    $g'(s)\le c\,g(s)$ for almost every $s\in[0,1]$. Then $g(s)\le g(0)e^{cs}$
    for every $s\in[0,1]$.
\end{lemma}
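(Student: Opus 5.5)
The plan is to handle the absolutely continuous case directly through the logarithm-free comparison function $h(s):=e^{-cs}g(s)$, rather than through $\log g$. Taking the logarithm would require $g>0$, and $g$ may vanish. Since $g$ is absolutely continuous on $[0,1]$ and $s\mapsto e^{-cs}$ is smooth, hence Lipschitz on $[0,1]$, the product $h$ is absolutely continuous. The product rule then holds almost everywhere:
\[
h'(s)=e^{-cs}\bigl(g'(s)-c\,g(s)\bigr)\qquad\text{for a.e. } s\in[0,1].
\]

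The key steps, in order, are as follows.
\begin{enumerate}[label=(\alph*)]
\item Justify that the product of two absolutely continuous functions on a compact interval is absolutely continuous, with the pointwise product rule valid at every point where both factors are differentiable, hence almost everywhere.
\item Use the hypothesis $g'\le cg$ almost everywhere, together with $e^{-cs}>0$, to conclude that $h'(s)\le0$ for almost every $s$.
\item Invoke the fundamental theorem of calculus for absolutely continuous functions: $h(s)-h(0)=\int_0^s h'(\tau)\,d\tau\le0$.
\item Rearrange to $e^{-cs}g(s)\le g(0)$, that is, $g(s)\le g(0)e^{cs}$ for every $s\in[0,1]$.
\end{enumerate}
Note that nonnegativity of $g$ is not actually needed for this argument; it is only part of the setting in which the lemma is later applied.

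The only step requiring care is (c). This is exactly why absolute continuity, and not mere almost-everywhere differentiability, is assumed. A Cantor-type function has derivative zero almost everywhere yet increases, so without absolute continuity the integral identity fails and the conclusion can be false. With absolute continuity, the identity $h(s)=h(0)+\int_0^s h'$ is standard (Lebesgue's theorem), and no further obstacle remains.
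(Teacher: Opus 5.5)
Your proposal is correct and follows the paper's proof exactly: you pass to $h(s)=e^{-cs}g(s)$, note $h'\le 0$ almost everywhere, and conclude from the fundamental theorem of calculus for absolutely continuous functions that $h(s)\le h(0)=g(0)$. Your side remarks are also accurate: nonnegativity of $g$ is not needed, and absolute continuity is precisely what justifies the integral identity in step (c).
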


\begin{proof}
    The function $h(s):=e^{-cs}g(s)$ is absolutely continuous on $[0,1]$, with
    $h'(s)=e^{-cs}\bigl(g'(s)-c\,g(s)\bigr)\le0$ for almost every $s$. Hence
    $h(s)=h(0)+\int_0^s h'(u)\,du\le h(0)=g(0)$, that is $g(s)\le g(0)e^{cs}$.
\end{proof}

\begin{lemma}
    \label{lem:local}
    Let $M$ be $\kappa$-admissible for $f$. Then, for all $x,d\in\R^d$,
    \begin{align}
        \begin{gathered}
            \norm{\nabla f(x+d)-\nabla f(x)}
            \le M(x)\,\frac{e^{\kappa\norm d}-1}{\kappa\norm d}\,\norm d,\\
            \bigl|f(x+d)-f(x)-\ip{\nabla f(x)}{d}\bigr|
            \le M(x)\,\omega(\kappa\norm d)\,\norm d^2,
        \end{gathered}
        \label{eq:local}
    \end{align}
    where the quotient is interpreted as $1$ when $\kappa\norm d=0$.
\end{lemma}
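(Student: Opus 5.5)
The plan is to work along the segment $z(s):=x+sd$, $s\in[0,1]$, and control the curvature scale on it before integrating. The second admissibility condition in \eqref{eq:admissible}, applied with the pair $(x,z(s))$, gives directly $M(z(s))\le M(x)e^{\kappa s\norm d}$. This already avoids the Gr\"onwall argument of \Cref{lem:gronwall}. That lemma would only be needed if one instead tracked $M_L$ through $\norm{\nabla f(z(s))}$, which is absolutely continuous. Combined with the first condition, we get $\norm{\nabla^2 f(z(s))}\le M(x)e^{\kappa s\norm d}$ for every $s\in[0,1]$.

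For the gradient bound, $f$ is $C^2$, so
\[
\nabla f(x+d)-\nabla f(x)=\int_0^1\nabla^2 f(z(s))\,d\,ds,
\]
and therefore
\[
\norm{\nabla f(x+d)-\nabla f(x)}\le M(x)\norm d\int_0^1 e^{\kappa\norm d s}\,ds=M(x)\norm d\,\frac{e^{\kappa\norm d}-1}{\kappa\norm d}.
\]
When $\kappa\norm d=0$ the integral equals $1$, which matches the stated convention.

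For the function-value bound, I will use the Taylor remainder in integral form:
\[
f(x+d)-f(x)-\ip{\nabla f(x)}{d}=\int_0^1(1-s)\,\ip{\nabla^2 f(z(s))d}{d}\,ds.
\]
Its absolute value is at most
\[
M(x)\norm d^2\int_0^1(1-s)e^{us}\,ds,\qquad u:=\kappa\norm d .
\]
The remaining step is to compute $\int_0^1(1-s)e^{us}\,ds=(e^u-1-u)/u^2=\omega(u)$ for $u>0$. One way is integration by parts. Another is termwise expansion: $\int_0^1(1-s)s^k\,ds=1/((k+1)(k+2))$, which gives $\sum_k u^k/(k+2)!$. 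The value at $u=0$ is $1/2=\omega(0)$.

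I expect no real obstacle. The only subtle point is that the second admissibility condition is a global inequality valid for all pairs of points, so it can be applied pointwise along the segment without any differential argument. If one preferred to use only local growth, the fallback would be to differentiate $s\mapsto M(z(s))$ almost everywhere and apply \Cref{lem:gronwall}. That is not needed here, since admissibility is stated globally.
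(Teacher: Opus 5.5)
Your proposal is correct and follows the paper's proof in essence. The gradient bound is obtained exactly as there, from the fundamental theorem of calculus and the pointwise bound $\norm{\nabla^2 f(x+sd)}\le M(x)e^{\kappa s\norm d}$ given by the first two admissibility conditions. For the function-value bound the paper integrates the already-proved gradient bound applied to $sd$, whereas you use the second-order integral remainder with kernel $(1-s)$; the two differ only by a Fubini swap and yield the same constant $\omega(\kappa\norm d)$.
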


\begin{proof}
    By the fundamental theorem of calculus and the first two admissibility
    conditions,
    \begin{align*}
        \norm{\nabla f(x+d)-\nabla f(x)}
        &\le\int_0^1\norm{\nabla^2f(x+sd)}\norm d\,ds\\
        &\le\norm d\int_0^1 M(x)e^{\kappa s\norm d}\,ds
        =M(x)\norm d\,\frac{e^{\kappa\norm d}-1}{\kappa\norm d}.
    \end{align*}
    For the second inequality, write
    \begin{align*}
        f(x+d)-f(x)-\ip{\nabla f(x)}{d}=\int_0^1\ip{\nabla f(x+sd)-\nabla f(x)}{d}\,ds
    \end{align*}
    and apply the first inequality with $d=sd$, which gives
    \begin{align*}
        \norm{\nabla f(x+sd)-\nabla f(x)}\le M(x)\norm d\,\frac{e^{s\kappa\norm d}-1}{\kappa\norm d}.
    \end{align*}
    Integrating,
    \begin{align*}
        \bigl|f(x+d)-f(x)-\ip{\nabla f(x)}{d}\bigr|
        &\le M(x)\norm d^2\int_0^1\frac{e^{s\kappa\norm d}-1}{\kappa\norm d}\,ds\\
        &=M(x)\norm d^2\,\frac{e^{\kappa\norm d}-1-\kappa\norm d}{(\kappa\norm d)^2}
        =M(x)\,\omega(\kappa\norm d)\norm d^2 .\qedhere
    \end{align*}
\end{proof}

\begin{lemma}
    \label{lem:gap}
    Let $M$ be $\kappa$-admissible for $f$. Then, for every $x\in\R^d$,
    \begin{align}
        f(x)-f_\star\ge(2\log2-1)\,\frac{\norm{\nabla f(x)}^2}{M(x)} .
        \label{eq:gap}
    \end{align}
\end{lemma}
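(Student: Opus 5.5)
Assume $\nabla f(x)\ne0$ (otherwise the bound is trivial, since $f(x)\ge f_\star$) and take one gradient step $x^+=x-\eta\,\nabla f(x)$. The plan is to evaluate the second inequality of \Cref{lem:local} along this step and then optimize over $\eta$. Write $g=\norm{\nabla f(x)}$ and $M=M(x)$, and parametrize the step as $\eta=t/M$ with $t>0$, so that $\norm d=tg/M$. \Cref{lem:local} then gives
\begin{align*}
    f_\star\le f(x^+)\le f(x)-\frac{t g^2}{M}+M\,\omega\!\left(\frac{\kappa t g}{M}\right)\frac{t^2g^2}{M^2}.
\end{align*}

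The third admissibility condition gives $\kappa g/M\le1$, so $\kappa t g/M\le t$. Since $\omega$ is increasing, $\omega(\kappa tg/M)\le\omega(t)$. Substituting and rearranging,
\begin{align*}
    f(x)-f_\star\ge\frac{g^2}{M}\bigl(t-t^2\omega(t)\bigr)=\frac{g^2}{M}\bigl(1+2t-e^{t}\bigr),
\end{align*}
where the equality is the identity \eqref{eq:omega-identities}. The function $1+2t-e^t$ is maximized at $t=\log 2$, where its value is $1+2\log2-2=2\log2-1$. This gives \eqref{eq:gap}.

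The only delicate points are the following. First, the case $\kappa=0$, where $\omega(0)=1/2$ and the monotonicity argument still works because $\omega(0)\le\omega(t)$. Second, the need for \Cref{lem:local} to hold with the scale evaluated at the base point $x$ for an arbitrary displacement. That is exactly how it is stated, so no restriction on the step length is needed.

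I expect the main step to be recognizing that the third admissibility condition is what converts the $\kappa$-dependent remainder into the $\kappa$-free bound $\omega(t)$. After that, the result is a one-variable optimization.
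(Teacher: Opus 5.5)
Your proposal is correct and follows the paper's proof of \Cref{lem:gap} essentially step for step. You take a step $t\,\nabla f(x)/M(x)$ (the paper's $\alpha$) along $-\nabla f(x)$, apply \Cref{lem:local}, and use the third admissibility condition with the monotonicity of $\omega$ to replace the remainder by $\omega(t)$. You then apply the identity $t-t^2\omega(t)=1+2t-e^{t}$ and maximize at $t=\log2$, exactly as the paper does.
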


\begin{proof}
    Fix $\alpha\ge0$ and consider the point $x-\alpha\nabla f(x)/M(x)$. The third
    admissibility condition gives
    $\kappa\norm{\alpha\nabla f(x)/M(x)}=\alpha\kappa\norm{\nabla f(x)}/M(x)\le\alpha$,
    so by \Cref{lem:local}, the monotonicity of $\omega$, and
    \eqref{eq:omega-identities},
    \begin{align*}
        f_\star\le f\Bigl(x-\alpha\frac{\nabla f(x)}{M(x)}\Bigr)
        &\le f(x)-\alpha\frac{\norm{\nabla f(x)}^2}{M(x)}
        +M(x)\,\omega(\alpha)\,\alpha^2\frac{\norm{\nabla f(x)}^2}{M(x)^2}\\
        &=f(x)-\bigl(\alpha-\alpha^2\omega(\alpha)\bigr)\frac{\norm{\nabla f(x)}^2}{M(x)}
        =f(x)-\bigl(1+2\alpha-e^{\alpha}\bigr)\frac{\norm{\nabla f(x)}^2}{M(x)} .
    \end{align*}
    Thus we get
    $f(x)-f_\star\ge(1+2\alpha-e^{\alpha})\norm{\nabla f(x)}^2/M(x)$ for every
    $\alpha\ge0$. The coefficient is maximized at $\alpha=\log2$, where it equals
    $2\log2-1$.
\end{proof}

\begin{lemma}
    \label{lem:geometry}
    Let $M$ be $\kappa$-admissible for $f$, let $\rho\ge0$, and let $x_t$ be an iterate of \ref{eq:cssam}. Then
    \begin{gather}
        e^{-\rho}M(x_t)\le M(y_t)\le e^{\rho}M(x_t),\qquad
        \norm{\nabla f(y_t)-\nabla f(x_t)}\le(e^{\rho}-1)\norm{\nabla f(x_t)},
        \label{eq:geometry1}\\
        \norm{\nabla f(y_t)}\le e^{\rho}\norm{\nabla f(x_t)},\qquad
        \ip{\nabla f(x_t)}{\nabla f(y_t)}\ge(2-e^{\rho})\norm{\nabla f(x_t)}^2 .
        \label{eq:geometry2}
    \end{gather}
\end{lemma}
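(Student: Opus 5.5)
Let $d_t:=y_t-x_t=\rho\nabla f(x_t)/M(x_t)$ be the perturbation. The plan is to read each of the four claims off the admissibility conditions in \eqref{eq:admissible} together with \Cref{lem:local}. The single fact that drives everything is that the perturbation is short in the units set by $\kappa$. By the third admissibility condition,
\begin{align*}
    \kappa\norm{d_t}=\frac{\rho\,\kappa\norm{\nabla f(x_t)}}{M(x_t)}\le\rho .
\end{align*}
So the exponent $\kappa\norm{d_t}$ appearing in both the second admissibility condition and \Cref{lem:local} is at most $\rho$.

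\textbf{Scale comparison.} Apply the second admissibility condition in both directions, with $(x,y)=(x_t,y_t)$ and with $(x,y)=(y_t,x_t)$. This gives
\begin{align*}
    M(y_t)\le M(x_t)e^{\kappa\norm{d_t}}\le e^{\rho}M(x_t),
    \qquad
    M(x_t)\le e^{\rho}M(y_t).
\end{align*}
Together these are the two-sided bound in \eqref{eq:geometry1}. The second direction works because the condition is stated for all pairs $x,y$.

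\textbf{Gradient distortion.} Apply the first inequality of \Cref{lem:local} with $x=x_t$ and $d=d_t$, and write $u:=\kappa\norm{d_t}\in[0,\rho]$. The lemma gives
\begin{align*}
    \norm{\nabla f(y_t)-\nabla f(x_t)}\le M(x_t)\,\frac{e^{u}-1}{u}\,\norm{d_t}.
\end{align*}
Since $(e^u-1)/u$ is increasing in $u$, it is at most $(e^\rho-1)/\rho$ when $\rho>0$. Substituting $M(x_t)\norm{d_t}=\rho\norm{\nabla f(x_t)}$ then yields
\begin{align*}
    \norm{\nabla f(y_t)-\nabla f(x_t)}\le(e^{\rho}-1)\norm{\nabla f(x_t)}.
\end{align*}
The case $\rho=0$ is trivial, since then $y_t=x_t$. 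This is the only step needing a little care, namely the monotonicity of the quotient and the cancellation of $M(x_t)$ against $\norm{d_t}$. I expect no real obstacle here.

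\textbf{Remaining bounds in \eqref{eq:geometry2}.} The norm bound follows from the triangle inequality:
\begin{align*}
    \norm{\nabla f(y_t)}\le\norm{\nabla f(x_t)}+(e^\rho-1)\norm{\nabla f(x_t)}=e^\rho\norm{\nabla f(x_t)}.
\end{align*}
For the inner product, write $\nabla f(y_t)=\nabla f(x_t)+(\nabla f(y_t)-\nabla f(x_t))$ and apply Cauchy--Schwarz to the error term:
\begin{align*}
    \ip{\nabla f(x_t)}{\nabla f(y_t)}\ge\norm{\nabla f(x_t)}^2-(e^\rho-1)\norm{\nabla f(x_t)}^2=(2-e^\rho)\norm{\nabla f(x_t)}^2 .
\end{align*}
All steps are direct, and none requires $\nabla f(x_t)\neq0$, so the lemma holds for every iterate.
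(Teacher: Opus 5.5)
Your proposal is correct and follows the paper's proof essentially step for step. Both bound $\kappa\norm{y_t-x_t}\le\rho$ via the third admissibility condition and apply the second condition in both directions. Both then use the first inequality of \Cref{lem:local} with the monotonicity of $(e^u-1)/u$, and finish with the triangle and Cauchy--Schwarz inequalities.
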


\begin{proof}
    For $\rho=0$ we have $y_t=x_t$ and all claims are trivial, so let $\rho>0$.
    The third admissibility condition gives
    $\kappa\norm{y_t-x_t}=\rho\kappa\norm{\nabla f(x_t)}/M(x_t)\le\rho$. The second
    condition, applied from $x_t$ to $y_t$ and from $y_t$ to $x_t$, gives
    $M(y_t)\le M(x_t)e^{\kappa\norm{y_t-x_t}}\le e^{\rho}M(x_t)$ and
    $M(x_t)\le e^{\rho}M(y_t)$. \Cref{lem:local} with $d=y_t-x_t$ gives
    \begin{align*}
        \norm{\nabla f(y_t)-\nabla f(x_t)}
        &\le M(x_t)\,\frac{e^{\kappa\norm{y_t-x_t}}-1}{\kappa\norm{y_t-x_t}}\,
        \frac{\rho\norm{\nabla f(x_t)}}{M(x_t)}\\
        &\le\rho\norm{\nabla f(x_t)}\,\frac{e^{\rho}-1}{\rho}
        =(e^{\rho}-1)\norm{\nabla f(x_t)},
    \end{align*}
    since $u\mapsto(e^u-1)/u$ is increasing and $\kappa\norm{y_t-x_t}\le\rho$.
    The claims in \eqref{eq:geometry2} follow from the triangle and
    Cauchy--Schwarz inequalities:
    \begin{align*}
        \norm{\nabla f(y_t)}
        &\le\norm{\nabla f(x_t)}+\norm{\nabla f(y_t)-\nabla f(x_t)}
        \le e^{\rho}\norm{\nabla f(x_t)},\\
        \ip{\nabla f(x_t)}{\nabla f(y_t)}
        &=\norm{\nabla f(x_t)}^2+\ip{\nabla f(x_t)}{\nabla f(y_t)-\nabla f(x_t)}\\
        &\ge\norm{\nabla f(x_t)}^2-(e^{\rho}-1)\norm{\nabla f(x_t)}^2 .\qedhere
    \end{align*}
\end{proof}

\begin{lemma}
    \label{lem:descent}
    Let $M$ be $\kappa$-admissible for $f$, let $\rho\ge0$ and $\gamma>0$ satisfy
    $\chi(\rho,\gamma)>0$, and let $x_t$ be an iterate of \ref{eq:cssam}. Then
    \begin{align}
        f(x_{t+1})\le f(x_t)-\chi(\rho,\gamma)\frac{\gamma}{M(y_t)}\norm{\nabla f(x_t)}^2
        \le f(x_t)-\gamma e^{-\rho}\chi(\rho,\gamma)\,\frac{\norm{\nabla f(x_t)}^2}{M(x_t)} .
        \label{eq:descent}
    \end{align}
    In particular, $f(x_t)-f_\star$ is nonincreasing in $t$.
\end{lemma}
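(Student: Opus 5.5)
We will prove \Cref{lem:descent} by applying the second-order bound of \Cref{lem:local} from the \emph{perturbed point} $y_t$, not from $x_t$. Write the outer step as $d=-\frac{\gamma}{M(y_t)}\nabla f(y_t)$, so that $x_{t+1}=x_t+d$. The difficulty is that \Cref{lem:local} expands around a base point with its own gradient, while the update uses $\nabla f(y_t)$ at base point $x_t$. We therefore expand $f$ around $x_t$ with displacement $d$, using $M(x_t)$ as the scale:
\begin{align*}
    f(x_{t+1})\le f(x_t)-\frac{\gamma}{M(y_t)}\ip{\nabla f(x_t)}{\nabla f(y_t)}+M(x_t)\,\omega(\kappa\norm d)\norm d^2 .
\end{align*}

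Each term is then controlled by \Cref{lem:geometry}. The inner-product term is at least $(2-e^{\rho})\frac{\gamma}{M(y_t)}\norm{\nabla f(x_t)}^2$ by \eqref{eq:geometry2}. For the quadratic term, the third admissibility condition at $y_t$ gives $\kappa\norm d=\gamma\kappa\norm{\nabla f(y_t)}/M(y_t)\le\gamma$. Since $\omega$ is increasing, $\omega(\kappa\norm d)\le\omega(\gamma)$. Also
\begin{align*}
    \norm d^2=\frac{\gamma^2}{M(y_t)^2}\norm{\nabla f(y_t)}^2\le\frac{\gamma^2e^{2\rho}}{M(y_t)^2}\norm{\nabla f(x_t)}^2 .
\end{align*}
Bounding $M(x_t)\le e^{\rho}M(y_t)$ by \eqref{eq:geometry1}, the quadratic term is at most $\gamma\omega(\gamma)e^{3\rho}\cdot\frac{\gamma}{M(y_t)}\norm{\nabla f(x_t)}^2$. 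Collecting terms gives the coefficient $2-e^{\rho}-\gamma\omega(\gamma)e^{3\rho}=\chi(\rho,\gamma)$, which is the first inequality in \eqref{eq:descent}. The factor $e^{3\rho}$ appearing in $\chi$ is exactly one factor $e^\rho$ from the scale comparison and two from the gradient norm, which confirms that this is the intended route.

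For the second inequality, $\chi>0$ by assumption, and $M(y_t)\le e^{\rho}M(x_t)$ gives $\frac{1}{M(y_t)}\ge\frac{e^{-\rho}}{M(x_t)}$. Monotonicity of $f(x_t)-f_\star$ follows immediately because the subtracted quantity is nonnegative.

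The only delicate step is bounding $\kappa\norm d\le\gamma$. This needs the third admissibility condition evaluated at $y_t$ rather than at $x_t$, and it is precisely why the descent step uses $M(y_t)$. Beyond that, the care lies in combining the directions of the scale comparisons consistently. Using $M(x_t)$ as the base scale in \Cref{lem:local} while normalizing by $M(y_t)$ seems to be the combination that yields exactly $\chi$; expanding instead with the scale at $y_t$ would not fit, since the linear term must involve $\nabla f(x_t)$.
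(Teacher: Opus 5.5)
Your argument is correct and is exactly the paper's proof. You expand around $x_t$ with \Cref{lem:local}, using $\kappa\norm{d}\le\gamma$ from the third admissibility condition at $y_t$, and invoke \Cref{lem:geometry} for the inner product, for $M(x_t)/M(y_t)\le e^{\rho}$, and for $\norm{\nabla f(y_t)}^2\le e^{2\rho}\norm{\nabla f(x_t)}^2$, which yields $\chi(\rho,\gamma)$; then $M(y_t)\le e^{\rho}M(x_t)$ finishes it. Your opening sentence, however, says the bound is applied from $y_t$ rather than $x_t$, which contradicts your (correct) expansion from $x_t$, so delete it.
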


\begin{proof}
    By the third admissibility condition at $y_t$, the displacement
    $x_{t+1}-x_t=-\gamma\nabla f(y_t)/M(y_t)$ satisfies
    \begin{align*}
        \kappa\norm{x_{t+1}-x_t}=\gamma\kappa\,\frac{\norm{\nabla f(y_t)}}{M(y_t)}\le\gamma .
    \end{align*}
    \Cref{lem:local} at $x_t$ and the monotonicity of $\omega$ give
    \begin{align*}
        f(x_{t+1})\le f(x_t)-\frac{\gamma}{M(y_t)}\ip{\nabla f(x_t)}{\nabla f(y_t)}
        +M(x_t)\,\omega(\gamma)\,\frac{\gamma^2}{M(y_t)^2}\norm{\nabla f(y_t)}^2 .
    \end{align*}
    By \Cref{lem:geometry},
    $\ip{\nabla f(x_t)}{\nabla f(y_t)}\ge(2-e^{\rho})\norm{\nabla f(x_t)}^2$,
    $M(x_t)/M(y_t)\le e^{\rho}$, and
    $\norm{\nabla f(y_t)}^2\le e^{2\rho}\norm{\nabla f(x_t)}^2$. Hence
    \begin{align*}
        f(x_{t+1})&\le f(x_t)-\frac{\gamma}{M(y_t)}
        \bigl[2-e^{\rho}-\gamma\omega(\gamma)e^{3\rho}\bigr]\norm{\nabla f(x_t)}^2\\
        &=f(x_t)-\chi(\rho,\gamma)\frac{\gamma}{M(y_t)}\norm{\nabla f(x_t)}^2 .
    \end{align*}
    Since $\chi(\rho,\gamma)>0$ and $M(y_t)\le e^{\rho}M(x_t)$, the second
    inequality in \eqref{eq:descent} follows.
\end{proof}

\begin{lemma}
    \label{lem:fejer}
    Let $f$ be convex with a minimizer $x_\star$, let $M$ be $\kappa$-admissible
    for $f$, let $\rho\ge0$ and $\gamma>0$, and let $x_t$ be an iterate
    of \ref{eq:cssam}. Then
    \begin{align}
        \norm{x_{t+1}-x_\star}^2\le\norm{x_t-x_\star}^2
        -2c_0\frac{\gamma}{M(y_t)}\bigl(f(x_t)-f_\star\bigr).
        \label{eq:fejer}
    \end{align}
    In particular, if $c_0\ge0$, then $\norm{x_t-x_\star}\le\norm{x_0-x_\star}$
    for all $t$.
\end{lemma}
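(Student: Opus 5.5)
Expanding $\norm{x_{t+1}-x_\star}^2$ gives
\begin{align*}
    \norm{x_{t+1}-x_\star}^2=\norm{x_t-x_\star}^2-\frac{2\gamma}{M(y_t)}\ip{\nabla f(y_t)}{x_t-x_\star}+\frac{\gamma^2}{M(y_t)^2}\norm{\nabla f(y_t)}^2 .
\end{align*}
The plan is to lower bound the cross term by convexity at the perturbed point $y_t$ and to absorb both the perturbation correction and the quadratic term into multiples of $(f(x_t)-f_\star)$ using \Cref{lem:gap}. The constants in $c_0$ indicate exactly this split: the term $\rho(e^\rho-1)/(2\log2-1)$ comes from the perturbation, and the term $\gamma e^{3\rho}/(2(2\log2-1))$ from the quadratic term.

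\textbf{Cross term.} Write $x_t-x_\star=(y_t-x_\star)-(y_t-x_t)$ with $y_t-x_t=\rho\nabla f(x_t)/M(x_t)$. Convexity gives $\ip{\nabla f(y_t)}{y_t-x_\star}\ge f(y_t)-f_\star$, and convexity again gives $f(y_t)\ge f(x_t)+\rho\norm{\nabla f(x_t)}^2/M(x_t)$. Therefore
\begin{align*}
    \ip{\nabla f(y_t)}{x_t-x_\star}\ge f(x_t)-f_\star+\frac{\rho}{M(x_t)}\Bigl(\norm{\nabla f(x_t)}^2-\ip{\nabla f(y_t)}{\nabla f(x_t)}\Bigr).
\end{align*}
By \Cref{lem:geometry}, $\ip{\nabla f(y_t)}{\nabla f(x_t)}\le\norm{\nabla f(x_t)}^2+(e^\rho-1)\norm{\nabla f(x_t)}^2$. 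So the bracket is at least $-(e^\rho-1)\norm{\nabla f(x_t)}^2$. \Cref{lem:gap} then gives $\norm{\nabla f(x_t)}^2/M(x_t)\le(f(x_t)-f_\star)/(2\log2-1)$. Combining these,
\begin{align*}
    \ip{\nabla f(y_t)}{x_t-x_\star}\ge\Bigl(1-\tfrac{\rho(e^\rho-1)}{2\log2-1}\Bigr)(f(x_t)-f_\star).
\end{align*}

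\textbf{Quadratic term.} Use $\norm{\nabla f(y_t)}^2\le e^{2\rho}\norm{\nabla f(x_t)}^2$ and $1/M(y_t)\le e^{\rho}/M(x_t)$, both from \Cref{lem:geometry}. This gives
\begin{align*}
    \frac{\gamma^2}{M(y_t)^2}\norm{\nabla f(y_t)}^2\le\frac{\gamma}{M(y_t)}\,\gamma e^{3\rho}\frac{\norm{\nabla f(x_t)}^2}{M(x_t)}\le\frac{\gamma}{M(y_t)}\,\frac{\gamma e^{3\rho}}{2\log2-1}(f(x_t)-f_\star),
\end{align*}
where the last step is \Cref{lem:gap} again. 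This equals $2\frac{\gamma}{M(y_t)}\cdot\frac{\gamma e^{3\rho}}{2(2\log2-1)}(f(x_t)-f_\star)$. Adding this to $-2\frac{\gamma}{M(y_t)}$ times the cross-term bound yields exactly $-2c_0\frac{\gamma}{M(y_t)}(f(x_t)-f_\star)$, which is \eqref{eq:fejer}.

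\textbf{Consequence.} If $c_0\ge0$, the subtracted term is nonnegative because $f(x_t)\ge f_\star$ and $M>0$. Induction then gives $\norm{x_t-x_\star}\le\norm{x_0-x_\star}$.

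\textbf{Main obstacle.} The delicate step is the cross term. One has to route the estimate through $f(y_t)$ rather than bounding $\ip{\nabla f(y_t)}{x_t-x_\star}$ directly, because a direct bound would introduce the uncontrolled quantity $\norm{x_t-x_\star}$. The decomposition through $y_t$, together with the second use of convexity at $x_t$, is what produces the $\rho(e^\rho-1)$ factor matching $c_0$.
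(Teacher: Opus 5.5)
Your proposal is correct and follows the paper's proof essentially step for step. It uses the same expansion of the square and the same decomposition $x_t-x_\star=(y_t-x_\star)-\rho\nabla f(x_t)/M(x_t)$, with convexity applied at $y_t$ and at $x_t$. It also uses the same Cauchy--Schwarz bound on $\ip{\nabla f(y_t)-\nabla f(x_t)}{\nabla f(x_t)}$ via \Cref{lem:geometry}, and the same two applications of \Cref{lem:gap}, which produce exactly the two correction terms in $c_0$.
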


\begin{proof}
    Expanding the square,
    \begin{align*}
        \norm{x_{t+1}-x_\star}^2=\norm{x_t-x_\star}^2
        -2\frac{\gamma}{M(y_t)}\ip{\nabla f(y_t)}{x_t-x_\star}
        +\frac{\gamma^2}{M(y_t)^2}\norm{\nabla f(y_t)}^2 .
    \end{align*}
    Convexity at $y_t$ gives $\ip{\nabla f(y_t)}{y_t-x_\star}\ge f(y_t)-f_\star$,
    and convexity at $x_t$ gives
    $f(y_t)\ge f(x_t)+\ip{\nabla f(x_t)}{y_t-x_t}=f(x_t)+\rho\norm{\nabla f(x_t)}^2/M(x_t)$.
    Since $x_t-x_\star=(y_t-x_\star)-\rho\nabla f(x_t)/M(x_t)$,
    \begin{align*}
        \ip{\nabla f(y_t)}{x_t-x_\star}
        &\ge f(y_t)-f_\star-\frac{\rho}{M(x_t)}\ip{\nabla f(y_t)}{\nabla f(x_t)}\\
        &\ge f(x_t)-f_\star-\frac{\rho}{M(x_t)}\ip{\nabla f(y_t)-\nabla f(x_t)}{\nabla f(x_t)}\\
        &\ge f(x_t)-f_\star-\frac{\rho(e^{\rho}-1)}{M(x_t)}\norm{\nabla f(x_t)}^2
        \ge\Bigl(1-\frac{\rho(e^{\rho}-1)}{2\log2-1}\Bigr)\bigl(f(x_t)-f_\star\bigr),
    \end{align*}
    where we used the Cauchy--Schwarz inequality with \Cref{lem:geometry},
    and then \Cref{lem:gap} in the form
    $\norm{\nabla f(x_t)}^2/M(x_t)\le(f(x_t)-f_\star)/(2\log2-1)$. Moreover, by
    \Cref{lem:geometry,lem:gap},
    \begin{align*}
        \frac{\norm{\nabla f(y_t)}^2}{M(y_t)}
        \le\frac{e^{2\rho}\norm{\nabla f(x_t)}^2}{e^{-\rho}M(x_t)}
        =e^{3\rho}\frac{\norm{\nabla f(x_t)}^2}{M(x_t)}
        \le\frac{e^{3\rho}}{2\log2-1}\bigl(f(x_t)-f_\star\bigr).
    \end{align*}
    Substituting both bounds into the expansion,
    \begin{align*}
        \norm{x_{t+1}-x_\star}^2\le\norm{x_t-x_\star}^2
        -2\frac{\gamma}{M(y_t)}\Bigl[1-\frac{\rho(e^{\rho}-1)}{2\log2-1}
        -\frac{\gamma e^{3\rho}}{2(2\log2-1)}\Bigr]\bigl(f(x_t)-f_\star\bigr),
    \end{align*}
    and the bracket equals $c_0$. The last claim follows since
    $f(x_t)-f_\star\ge0$.
\end{proof}

\medskip
\noindent
The next two lemmas concern the loss scale $M_H$ of \eqref{eq:assH} only. 
\begin{lemma}
    \label{lem:selfbound}
    Under \eqref{eq:assH}, for every $x\in\R^d$,
    \begin{align}
        \norm{\nabla f(x)}^2\le2H_0\bigl(f(x)-f_\star\bigr)+H_1\bigl(f(x)-f_\star\bigr)^2,
        \qquad
        \sqrt{H_1}\,\norm{\nabla f(x)}\le M_H(x).
        \label{eq:selfbound}
    \end{align}
\end{lemma}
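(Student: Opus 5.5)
The plan is to prove the first inequality of \eqref{eq:selfbound} by a one-dimensional descent argument along the negative gradient, using only the Hessian bound \eqref{eq:assH} and $f\ge f_\star$. We cannot invoke \Cref{lem:local} directly, since admissibility of $M_H$ is what this lemma is meant to establish. Fix $x$ with $g:=\nabla f(x)\ne0$ and set $\Delta:=f(x)-f_\star$. Along $z(s)=x-sg/\norm{g}$, define $\phi(s):=f(z(s))-f_\star\ge0$. Then $|\phi''(s)|\le H_0+H_1\phi(s)$ and $\phi'(0)=-\norm{g}$.

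The key step is to compare $\phi$ with the solution of the extremal ODE. As long as $\phi'<0$ on $[0,s]$, $\phi$ is decreasing, so $\phi\le\Delta$ there and $\phi''\le H_0+H_1\Delta=:K$. Integrating gives $\phi'(s)\le-\norm{g}+Ks$ and
\[
0\le\phi(s)\le\Delta-\norm{g}s+\tfrac{K}{2}s^2 .
\]
Choosing $s=\norm{g}/K$ is admissible, because $\phi'$ stays negative up to that point by the same bound. This yields $\norm{g}^2\le 2K\Delta=2H_0\Delta+2H_1\Delta^2$.

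That bound has the wrong constant ($2H_1$ instead of $H_1$), so a sharper, energy-type argument is needed. Multiply $\phi''\le H_0+H_1\phi$ by $\phi'\le0$ on the interval where $\phi'<0$, which reverses the inequality: $\phi'\phi''\ge(H_0+H_1\phi)\phi'$. Integrating from $0$ to $s$ gives
\[
\tfrac12\phi'(s)^2-\tfrac12\norm{g}^2\ge H_0(\phi(s)-\Delta)+\tfrac{H_1}{2}(\phi(s)^2-\Delta^2).
\]
Hence $\norm{g}^2\le\phi'(s)^2+2H_0(\Delta-\phi(s))+H_1(\Delta^2-\phi(s)^2)\le\phi'(s)^2+2H_0\Delta+H_1\Delta^2$.

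It remains to show that either $\phi'$ reaches $0$ at a finite $s^*$, where the bound closes with $\phi'(s^*)=0$, or, on $[0,\infty)$, $\phi'(s)^2$ has a subsequence tending to $0$. The second case follows because $\phi$ is decreasing and bounded below, so $\int_0^\infty|\phi'|<\infty$. Since $\phi''$ is bounded there (by $K$), $\phi'$ is uniformly continuous, and therefore $\phi'\to0$. I expect this limiting case to be the main, though mild, obstacle.

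The second inequality then follows algebraically:
\[
H_1\norm{g}^2\le 2H_0H_1\Delta+H_1^2\Delta^2\le(H_0+H_1\Delta)^2=M_H(x)^2 ,
\]
so taking square roots gives $\sqrt{H_1}\,\norm{\nabla f(x)}\le M_H(x)$. The case $g=0$ is trivial.
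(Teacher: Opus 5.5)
Your proof is correct, and it takes a different route from the paper.

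\textbf{The paper's route.} It splits off $H_1=0$, handling it through the affine case and the classical descent lemma. For $H_1>0$ it shifts to $\psi=\phi+H_0/H_1$, which satisfies $\psi''\le H_1\psi$ on all of $[0,\infty)$. It then compares $\psi$ with the explicit $\cosh/\sinh$ solution $q$ of $q''=H_1q$ having the same initial data, using the variation-of-constants formula with its nonnegative $\sinh$ kernel. Finally it reads the bound off the requirement $\inf_{t\ge0}q(t)=\sqrt{\psi(0)^2-\norm{\nabla f(x)}^2/H_1}\ge H_0/H_1$.

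\textbf{Your route.} You work directly with the differential inequality. Multiplying by $\phi'\le0$ and integrating is exactly the first integral $\tfrac12\phi'^2-H_0\phi-\tfrac{H_1}{2}\phi^2$ of the extremal equation $\phi''=H_0+H_1\phi$. Up to an affine change of variables, this is the same conserved quantity $q^2-q'^2/H_1$ that the paper evaluates at the turning point of $q$.

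\textbf{What each buys.} Your version avoids the comparison principle and the hyperbolic functions. It also covers $H_1=0$, including $H_0=H_1=0$, in the same argument with the sharp constant, with no separate case. The price is that the energy inequality only holds on the interval where $\phi'\le0$, so you must treat the two ways that interval can end:
\begin{itemize}
\item $\phi'$ vanishes at a finite $s^*$, where you close by continuity;
\item $\phi'<0$ on $[0,\infty)$, where integrability of $\phi'$ already gives $\liminf_{s\to\infty}|\phi'(s)|=0$.
\end{itemize}
In the second case the uniform-continuity (Barbalat) step is correct but not needed, since a subsequence suffices. The paper's comparison, by contrast, needs no sign information on $\psi'$, because $\psi\le q$ holds globally. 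Your preliminary estimate with the constant $2H_1$ is superfluous and can be dropped.
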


\begin{proof}
    If $\nabla f(x)=0$ there is nothing to prove. Suppose first $H_1=0$. If
    $H_0=0$, then $\nabla^2f\equiv0$, so $f$ is affine and, being bounded below,
    constant, whence $\nabla f(x)=0$. If $H_0>0$, then $f$ is $H_0$-smooth and
    the descent lemma at the point $x-\nabla f(x)/H_0$ gives
    $f_\star\le f(x-\nabla f(x)/H_0)\le f(x)-\norm{\nabla f(x)}^2/(2H_0)$, which
    is the first claim; the second is trivial.

    \medskip
    \noindent
    Now let $H_1>0$. We compare the loss along the negative gradient direction with the solution of a scalar differential equation, using $f\ge f_\star$ to bound $\norm{\nabla f(x)}$. Define, for $t\ge0$, the function
    \begin{align*}
        \psi(t):=f\Bigl(x-t\,\frac{\nabla f(x)}{\norm{\nabla f(x)}}\Bigr)-f_\star+\frac{H_0}{H_1} .
    \end{align*}
    Then $\psi\ge H_0/H_1$, $\psi(0)=f(x)-f_\star+H_0/H_1$,
    $\psi'(0)=-\norm{\nabla f(x)}$, and, by \eqref{eq:assH},
    \begin{align*}
        \psi''(t)\le\Bigl\lVert\nabla^2f\Bigl(x-t\,\frac{\nabla f(x)}{\norm{\nabla f(x)}}\Bigr)\Bigr\rVert
    \le H_0+H_1\Bigl(f\Bigl(x-t\,\frac{\nabla f(x)}{\norm{\nabla f(x)}}\Bigr)-f_\star\Bigr)
    =H_1\psi(t).
    \end{align*}
    Let $q$ solve $q''=H_1q$ with $q(0)=\psi(0)$ and $q'(0)=\psi'(0)$, that is
    \begin{align*}
        q(t)=\psi(0)\cosh\bigl(\sqrt{H_1}\,t\bigr)
        -\frac{\norm{\nabla f(x)}}{\sqrt{H_1}}\sinh\bigl(\sqrt{H_1}\,t\bigr).
    \end{align*}
    The difference $\psi-q$ vanishes together with its derivative at $t=0$ and
    satisfies $(\psi-q)''-H_1(\psi-q)=\psi''-H_1\psi\le0$, so by the
    variation-of-constants formula
    \begin{align*}
        \psi(t)-q(t)=\int_0^t\frac{\sinh(\sqrt{H_1}(t-s))}{\sqrt{H_1}}
        \bigl(\psi''(s)-H_1\psi(s)\bigr)\,ds\le0 .
    \end{align*}
    Hence $q(t)\ge\psi(t)\ge H_0/H_1$ for all $t\ge0$. If
    $\norm{\nabla f(x)}>\sqrt{H_1}\,\psi(0)$, then $q(t)\to-\infty$ as
    $t\to\infty$, a contradiction; thus $\norm{\nabla f(x)}\le\sqrt{H_1}\,\psi(0)$,
    and then $\inf_{t\ge0}q(t)=\sqrt{\psi(0)^2-\norm{\nabla f(x)}^2/H_1}$. This
    infimum is at least $H_0/H_1$, so
    \begin{align*}
        \norm{\nabla f(x)}^2&\le H_1\Bigl(\psi(0)^2-\frac{H_0^2}{H_1^2}\Bigr)
        =H_1\Bigl(f(x)-f_\star+\frac{H_0}{H_1}\Bigr)^2-\frac{H_0^2}{H_1}\\
        &=H_1\bigl(f(x)-f_\star\bigr)^2+2H_0\bigl(f(x)-f_\star\bigr),
    \end{align*}
    which is the first claim. For the second, multiply the first claim by $H_1$:
    \begin{align*}
        H_1\norm{\nabla f(x)}^2\le2H_0H_1\bigl(f(x)-f_\star\bigr)+H_1^2\bigl(f(x)-f_\star\bigr)^2
        \le\bigl(H_0+H_1(f(x)-f_\star)\bigr)^2 .\qedhere
    \end{align*}
\end{proof}

\begin{lemma}
    \label{lem:zeroscale}
    Under \eqref{eq:assH}, if $M_H(x_0)=0$ for some $x_0\in\R^d$, then
    $M_H\equiv0$ and $f$ is constant.
\end{lemma}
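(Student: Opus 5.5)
We prove the contrapositive. If $M_H\equiv0$, then every Hessian vanishes, so $f$ is affine; an affine function that is bounded below is constant. The real task is therefore to show that $M_H(x_0)=0$ at a single point forces $M_H\equiv0$.

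Since $f(x_0)-f_\star\ge0$, both terms in $M_H(x_0)=H_0+H_1(f(x_0)-f_\star)$ are nonnegative, and the hypothesis gives $H_0=0$ and $H_1(f(x_0)-f_\star)=0$. We split on $H_1$.

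\textbf{Case $H_1=0$.} Then $M_H\equiv0$, hence $\nabla^2 f\equiv0$, and we conclude as above.

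\textbf{Case $H_1>0$.} Then $f(x_0)=f_\star$, so $x_0$ is a global minimizer and $\nabla f(x_0)=0$. We want to show that $f\equiv f_\star$. Fix any $x$ and consider the segment $x_0+s(x-x_0)$ with $s\in[0,1]$, and set
\[
\phi(s):=f\bigl(x_0+s(x-x_0)\bigr)-f_\star\ge0 .
\]
By \Cref{lem:selfbound} with $H_0=0$, we have $\norm{\nabla f(z)}\le\sqrt{H_1}\,(f(z)-f_\star)$ at every point $z$. Consequently
\[
|\phi'(s)|\le\norm{\nabla f}\,\norm{x-x_0}\le\sqrt{H_1}\,\norm{x-x_0}\,\phi(s).
\]
The function $\phi$ is $C^1$, nonnegative, satisfies $\phi(0)=0$, and obeys $\phi'\le c\,\phi$ with $c=\sqrt{H_1}\norm{x-x_0}$. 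Gr\"onwall's inequality (\Cref{lem:gronwall}) then gives $\phi(s)\le\phi(0)e^{cs}=0$. Hence $f(x)=f_\star$ for every $x$, so $f$ is constant, and $M_H\equiv H_0+0=0$.

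The only delicate step is the $H_1>0$ case. A direct Hessian-based ODE argument would use $\psi''\le H_1\psi$, which is only an upper bound on the second derivative and does not by itself prevent $f$ from growing. The first-order bound from \Cref{lem:selfbound} avoids this: with $H_0=0$ it makes the gradient norm linear in the gap, and Gr\"onwall applied to a function vanishing at $0$ pins it to zero.
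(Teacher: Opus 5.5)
Your proof is correct and follows the paper's argument. You split on $H_1$ and dispatch $H_1=0$ through the vanishing Hessian; for $H_1>0$ you combine \Cref{lem:selfbound} (with $H_0=0$) and Gr\"onwall's inequality (\Cref{lem:gronwall}) along the segment from $x_0$ to an arbitrary point, working with $f-f_\star$ instead of $M_H=H_1(f-f_\star)$, which is the same thing. The only slip is the opening ``we prove the contrapositive,'' which is a misnomer since what follows is a direct proof.
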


\begin{proof}
    $M_H(x_0)=0$ forces $H_0=0$ and $f(x_0)=f_\star$. If $H_1=0$, then
    $\nabla^2f\equiv0$, so $f$ is affine and bounded below, hence constant, and
    $M_H\equiv0$. Let $H_1>0$ and fix $y\in\R^d$. The function
    $s\mapsto M_H(x_0+s(y-x_0))=H_1\bigl(f(x_0+s(y-x_0))-f_\star\bigr)$ is
    nonnegative and vanishes at $s=0$, and by \Cref{lem:selfbound} its
    derivative satisfies
    \begin{align*}
        \frac{d}{ds}M_H\bigl(x_0+s(y-x_0)\bigr)
        &=H_1\ip{\nabla f(x_0+s(y-x_0))}{y-x_0}\\
        &\le\sqrt{H_1}\,M_H\bigl(x_0+s(y-x_0)\bigr)\norm{y-x_0} .
    \end{align*}
    \Cref{lem:gronwall} with $c=\sqrt{H_1}\norm{y-x_0}$ gives
    $M_H(x_0+s(y-x_0))\le M_H(x_0)e^{\sqrt{H_1}s\norm{y-x_0}}=0$ on $[0,1]$, so
    $M_H(y)=0$, that is $f(y)=f_\star$. As $y$ was arbitrary, $f$ is constant.
\end{proof}

\medskip
\noindent
We close this section with the elementary sequence lemma that turns a quadratic decrease of the gap into a $1/T$ rate. 
\begin{lemma}
    \label{lem:recursion}
    Let $(x_t)_{t\ge0}$ be a sequence such that $f(x_t)-f_\star$ is nonincreasing
    in $t$ and
    \begin{align*}
        f(x_{t+1})-f_\star\le\bigl(f(x_t)-f_\star\bigr)-\beta\bigl(f(x_t)-f_\star\bigr)^2
        \qquad\text{for all }t\ge0,
    \end{align*}
    where $\beta>0$. Then $f(x_T)-f_\star\le1/(\beta T)$ for every $T\ge1$.
\end{lemma}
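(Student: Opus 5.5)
Write $a_t:=f(x_t)-f_\star\ge0$, a nonincreasing sequence with $a_{t+1}\le a_t-\beta a_t^2$. The plan is the standard reciprocal-telescoping argument.

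\emph{Case: some gap vanishes.} If $a_t=0$ for some $t\le T$, then monotonicity gives $a_T=0$, and the bound holds trivially.

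\emph{Main step: all gaps positive.} Assume $a_t>0$ for all $t\le T$. Dividing the recursion by $a_ta_{t+1}>0$ gives
\begin{align*}
\frac{1}{a_{t+1}}-\frac{1}{a_t}\ge\beta\,\frac{a_t}{a_{t+1}}\ge\beta .
\end{align*}
The last inequality uses $a_{t+1}\le a_t$, which is exactly where the monotonicity hypothesis enters. Summing over $t=0,\dots,T-1$ yields
\begin{align*}
\frac{1}{a_T}\ge\frac{1}{a_0}+\beta T\ge\beta T,
\end{align*}
hence $a_T\le 1/(\beta T)$.

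There is no real obstacle here. The only care needed is to avoid dividing by zero, which the case split handles, and to note that $a_{t+1}\ge0$ holds automatically because $f\ge f_\star$.
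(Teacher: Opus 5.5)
Your proof is correct and follows the paper's argument essentially line for line. The paper splits on whether $f(x_T)=f_\star$, which by monotonicity is equivalent to your split; it then divides the recursion by $a_ta_{t+1}$, uses $a_{t+1}\le a_t$ to get $1/a_{t+1}-1/a_t\ge\beta$, and telescopes.
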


\begin{proof}
    If $f(x_T)=f_\star$ there is nothing to prove. Otherwise
    $f(x_t)-f_\star\ge f(x_T)-f_\star>0$ for all $t\le T$, and for $t<T$,
    \begin{align*}
        \frac1{f(x_{t+1})-f_\star}-\frac1{f(x_t)-f_\star}
        =\frac{\bigl(f(x_t)-f_\star\bigr)-\bigl(f(x_{t+1})-f_\star\bigr)}
        {\bigl(f(x_t)-f_\star\bigr)\bigl(f(x_{t+1})-f_\star\bigr)}
        \ge\beta\,\frac{f(x_t)-f_\star}{f(x_{t+1})-f_\star}\ge\beta .
    \end{align*}
    Summing over $t<T$ gives
    $1/(f(x_T)-f_\star)\ge1/(f(x_0)-f_\star)+\beta T\ge\beta T$.
\end{proof}

\newpage
\section{Proofs for \texorpdfstring{\Cref{sec:failures}}{Section 2}}
\label{app:endpoint}

In this section, we present the proofs of the results of \Cref{sec:failures}. We first verify the distortion bounds \eqref{eq:heuristic-bounds-smooth} and \eqref{eq:heuristic-bounds-l0l1} for \ref{eq:sam} and \ref{eq:usam}. We then prove \Cref{prop:endpoint-failures} by direct computation on the two one-dimensional examples, \Cref{thm:necessary-scaling} by exhibiting two one-dimensional objectives in the class \eqref{eq:assL}, and \Cref{prop:admissible} using the lemmas of \Cref{app:lemmas}. For completeness, we restate each result before its proof.

\paragraph{Verification of \eqref{eq:heuristic-bounds-smooth} and \eqref{eq:heuristic-bounds-l0l1}.}
The displacement is $d=r\nabla f(x)/\norm{\nabla f(x)}$
for \ref{eq:sam} and $d=\rho\nabla f(x)$ for \ref{eq:usam}, so $\norm d=r$ and
$\norm d=\rho\norm{\nabla f(x)}$ respectively. Under \eqref{eq:assSmooth},
$\norm{\nabla f(x+d)-\nabla f(x)}\le L\norm d$, and dividing by
$\norm{\nabla f(x)}$ gives \eqref{eq:heuristic-bounds-smooth}. Under
\eqref{eq:assL} with $L_1>0$, the scale $M_L$ is $L_1$-admissible by
\Cref{prop:admissible}(ii), which is proved below, so the first inequality in \Cref{lem:local} gives
\begin{align*}
    \norm{\nabla f(x+d)-\nabla f(x)}
    \le\bigl(L_0+L_1\norm{\nabla f(x)}\bigr)\,\frac{e^{L_1\norm d}-1}{L_1} .
\end{align*}
Inserting the two values of $\norm d$ and dividing by $\norm{\nabla f(x)}$
gives the two bounds in \eqref{eq:heuristic-bounds-l0l1}.

\begin{restatement}{prop:endpoint-failures}
    \begin{enumerate}[label=(\roman*)]
        \item
        Let $f(x)=x^2/2$, which satisfies \eqref{eq:assSmooth} with $L=1$. Run \ref{eq:sam} with $r>0$ and $\eta\in(0,2)$. For every $x\ne0$,
        \begin{equation*}
            D_{\mathrm{SAM}}(x):=D(x,r\sign(x))=\frac{r}{|x|},
            \qquad
            x_{t+1}=\bigl[1-\eta\,(1+D_{\mathrm{SAM}}(x_t))\bigr]\,x_t .
        \end{equation*}
        Let $a=\eta r/(2-\eta)$. Then $|x_{t+1}|<|x_t|$ if $|x_t|>a$, $|x_{t+1}|>|x_t|$ if $0<|x_t|<a$, and $x_{t+1}=-x_t$ if $|x_t|=a$. Consequently $\pm a$ is a two-cycle, and no trajectory converges to the minimizer (unless it lands on it exactly).

        \item
        Let $f(x)=x^4/4$, which satisfies \eqref{eq:assL} with $(L_0,L_1)=(4,1)$. Run \ref{eq:usam} with $\rho>0$ and $\eta>0$. For every $x\ne0$,
        \begin{equation*}
            D_{\mathrm{USAM}}(x):=D(x,\rho x^3)=(1+\rho x^2)^3-1,
            \qquad
            x_{t+1}=\bigl[1-\eta\,(1+D_{\mathrm{USAM}}(x_t))\,x_t^2\bigr]\,x_t .
        \end{equation*}
        Consequently, if $|x_0|>(2/(\eta\rho^3))^{1/8}$, then $x_{t+1}$ has the opposite sign of $x_t$ and $|x_{t+1}|>|x_t|$ for every $t$, and $|x_t|\to\infty$.
    \end{enumerate}
\end{restatement}

\begin{proof}[Proof of \Cref{prop:endpoint-failures}]
    \emph{(i)} Here $\nabla f(x)=x$, so for $x\ne0$ the perturbed point of
    \ref{eq:sam} is $y=x+r\sign(x)$, and since $r\sign(x)=(r/|x|)\,x$,
    \begin{align*}
        D_{\mathrm{SAM}}(x)=\frac{|y-x|}{|x|}=\frac{r}{|x|},
        \qquad
        x^+=x-\eta y=x-\eta\Bigl(1+\frac{r}{|x|}\Bigr)x
        =\bigl[1-\eta\,(1+D_{\mathrm{SAM}}(x))\bigr]\,x .
    \end{align*}
    Write $c(x):=1-\eta-\eta r/|x|$ for the factor in brackets, so that
    $|x^+|=|c(x)|\,|x|$. Since $\eta>0$ we have $c(x)<1$, and since $\eta<2$,
    \begin{align*}
        c(x)>-1
        \iff 2-\eta>\frac{\eta r}{|x|}
        \iff |x|>\frac{\eta r}{2-\eta}=a .
    \end{align*}
    Hence $|c(x)|<1$ if $|x|>a$, $|c(x)|>1$ if $0<|x|<a$, and $c(x)=-1$ if
    $|x|=a$, which are the three claims. In particular $a\mapsto-a\mapsto a$,
    so $\pm a$ is a two-cycle. Finally, suppose $x_t\ne0$ for all $t$ and
    $x_t\to0$. Then $0<|x_t|<a$ for all large $t$, so $|x_t|$ is eventually
    strictly increasing, which contradicts $x_t\to0$.

    \emph{(ii)} Here $f'(x)=x^3$ and $f''(x)=3x^2$. For $u\ge0$ the function
    $3u^2-u^3$ attains its maximum $4$ at $u=2$, so
    $3x^2\le4+|x|^3=4+|f'(x)|$ and \eqref{eq:assL} holds with $(L_0,L_1)=(4,1)$.
    The perturbed point of \ref{eq:usam} is $y=x+\rho x^3=x(1+\rho x^2)$, so
    that $y^3=(1+\rho x^2)^3x^3$ and, for $x\ne0$,
    \begin{align*}
        D_{\mathrm{USAM}}(x)=\frac{|y^3-x^3|}{|x|^3}=(1+\rho x^2)^3-1,
        \qquad
        x^+=x-\eta y^3=\bigl[1-\eta\,(1+D_{\mathrm{USAM}}(x))\,x^2\bigr]\,x .
    \end{align*}
    If $|x|>(2/(\eta\rho^3))^{1/8}$, then
    $\eta(1+\rho x^2)^3x^2\ge\eta\rho^3x^8>2$, so the factor in brackets is
    less than $-1$: $x^+$ has the opposite sign of $x$ and $|x^+|>|x|$. In
    particular $|x^+|>(2/(\eta\rho^3))^{1/8}$ again, so by induction the sign
    alternates and $|x_t|$ increases strictly for every $t$. Moreover,
    $|x_{t+1}|\ge(\eta\rho^3x_t^8-1)|x_t|\ge(\eta\rho^3x_0^8-1)|x_t|$ with
    $\eta\rho^3x_0^8-1>1$, so $|x_t|\to\infty$.
\end{proof}

\begin{restatement}{thm:necessary-scaling}
    Let $L_0>0$, $L_1>0$, and consider the perturbation given by a nonnegative collinear rule $d(x)=\alpha(\norm{\nabla f(x)})\nabla f(x)$. Suppose that for every one-dimensional objective satisfying \eqref{eq:assL} and every point with nonzero gradient, $D(x,d(x))\le\delta$. Then, for every $s>0$,
    \begin{equation}\tag{\ref{eq:necessary}}
        \alpha(s)\le \min\left\{\frac{\delta}{L_0},
        \frac{\log(1+\delta)}{L_1s}\right\}.
        \end{equation}
\end{restatement}

\begin{proof}[Proof of \Cref{thm:necessary-scaling}]
    Fix $s>0$. Consider first $f_q(x)=L_0x^2/2$. It is bounded below with
    $f_q''\equiv L_0\le L_0+L_1|f_q'(x)|$, so it satisfies \eqref{eq:assL}. At
    the point $x=s/L_0$ we have $f_q'(x)=s$, the perturbation is
    $d(x)=\alpha(s)s$, and
    \begin{align*}
        D\bigl(x,d(x)\bigr)=\frac{|f_q'(x+d(x))-f_q'(x)|}{|f_q'(x)|}
        =\frac{L_0\alpha(s)s}{s}=L_0\alpha(s) .
    \end{align*}
    The hypothesis $D(x,d(x))\le\delta$ gives $\alpha(s)\le\delta/L_0$.

    \medskip
    \noindent
    Next consider $f_e(x)=e^{L_1x}/L_1^2$. It is positive, hence bounded
    below, with $f_e'(x)=e^{L_1x}/L_1>0$ and
    $f_e''(x)=e^{L_1x}=L_1f_e'(x)\le L_0+L_1|f_e'(x)|$, so it satisfies
    \eqref{eq:assL}. At the point $x=\log(L_1s)/L_1$ we have $f_e'(x)=s$, the
    perturbation is $d(x)=\alpha(s)s\ge0$, and since
    $f_e'(x+d)=e^{L_1d}f_e'(x)$,
    \begin{align*}
        D\bigl(x,d(x)\bigr)=\frac{f_e'(x+d(x))-f_e'(x)}{f_e'(x)}
        =e^{L_1\alpha(s)s}-1 .
    \end{align*}
    The hypothesis gives $e^{L_1\alpha(s)s}\le1+\delta$, that is
    $\alpha(s)\le\log(1+\delta)/(L_1s)$. Together with the first bound this
    is \eqref{eq:necessary}.
\end{proof}

\begin{restatement}{prop:admissible}
    \leavevmode
    \begin{enumerate}[label=(\roman*)]
        \item Under \eqref{eq:assSmooth}, the constant scale $M\equiv L$ is
        admissible with $\kappa=0$.

        \item Under \eqref{eq:assL}, the scale $M_L$ is admissible with
        $\kappa=L_1$.

        \item Under \eqref{eq:assH}, either $M_H\equiv0$ and $f$ is constant,
        or $M_H>0$ everywhere and the scale $M_H$ is admissible with
        $\kappa=\sqrt{H_1}$.

        \item Under \eqref{eq:assL}, for every $x\in\R^d$,
        \begin{align}\tag{\ref{eq:L-implies-H}}
            L_0+L_1\norm{\nabla f(x)}
            \le\frac32L_0+\frac{3L_1^2}{2\,(2\log2-1)}\bigl(f(x)-f_\star\bigr).
        \end{align}
        In particular, \eqref{eq:assL} implies \eqref{eq:assH} with the
        constants $H_0=3L_0/2$ and $H_1=3L_1^2/(2(2\log2-1))$.
    \end{enumerate}
\end{restatement}

\begin{proof}[Proof of \Cref{prop:admissible}]
    \emph{(i)} With $M\equiv L$ and $\kappa=0$, the three conditions in
    \eqref{eq:admissible} read $\norm{\nabla^2f(x)}\le L$, $L\le L$, and $0\le L$.

    \emph{(ii)} The first condition is \eqref{eq:assL}, and the third holds
    with $\kappa=L_1$ because $L_1\norm{\nabla f(x)}\le L_0+L_1\norm{\nabla f(x)}$.
    For the second, fix $x,y$. Since $\nabla f$ is continuously differentiable,
    the function $s\mapsto M_L(x+s(y-x))=L_0+L_1\norm{\nabla f(x+s(y-x))}$ is
    Lipschitz on $[0,1]$, hence absolutely continuous, and at every point of
    differentiability
    \begin{align*}
        \frac{d}{ds}M_L\bigl(x+s(y-x)\bigr)
        &\le L_1\norm{\nabla^2f(x+s(y-x))\,(y-x)}
        \le L_1\norm{\nabla^2f(x+s(y-x))}\norm{y-x}\\
        &\le L_1M_L\bigl(x+s(y-x)\bigr)\norm{y-x} .
    \end{align*}
    \Cref{lem:gronwall} with $c=L_1\norm{y-x}$ gives $M_L(y)\le M_L(x)e^{L_1\norm{y-x}}$.

    \emph{(iii)} If $M_H$ vanishes somewhere, \Cref{lem:zeroscale} gives
    $M_H\equiv0$ and $f$ constant. Otherwise $M_H>0$ everywhere. The first
    condition is \eqref{eq:assH}, and the third, with $\kappa=\sqrt{H_1}$,
    is \Cref{lem:selfbound}. For the second, fix $x,y$; by
    \Cref{lem:selfbound},
    \begin{align*}
        \frac{d}{ds}M_H\bigl(x+s(y-x)\bigr)
        &=H_1\ip{\nabla f(x+s(y-x))}{y-x}
        \le H_1\norm{\nabla f(x+s(y-x))}\norm{y-x}\\
        &\le\sqrt{H_1}\,M_H\bigl(x+s(y-x)\bigr)\norm{y-x},
    \end{align*}
    and \Cref{lem:gronwall} with $c=\sqrt{H_1}\norm{y-x}$ gives $M_H(y)\le M_H(x)e^{\sqrt{H_1}\norm{y-x}}$.

    \emph{(iv)} By (ii), \Cref{lem:gap} applies to $M_L$ and gives
    $(2\log2-1)\norm{\nabla f(x)}^2\le(f(x)-f_\star)\bigl(L_0+L_1\norm{\nabla f(x)}\bigr)$,
    that is
    \begin{align*}
        \norm{\nabla f(x)}^2-\frac{L_1(f(x)-f_\star)}{2\log2-1}\norm{\nabla f(x)}
        -\frac{L_0(f(x)-f_\star)}{2\log2-1}\le0 .
    \end{align*}
    Since $s^2\le A+Bs$ with $A,B,s\ge0$ implies
    $s\le\bigl(B+\sqrt{B^2+4A}\bigr)/2\le B+\sqrt A$,
    \begin{align*}
        \norm{\nabla f(x)}\le\frac{L_1(f(x)-f_\star)}{2\log2-1}
        +\sqrt{\frac{L_0(f(x)-f_\star)}{2\log2-1}} .
    \end{align*}
    Therefore, using $\sqrt{AB}\le(A+B)/2$ with $A=L_0$ and
    $B=L_1^2(f(x)-f_\star)/(2\log2-1)$,
    \begin{align*}
        L_0+L_1\norm{\nabla f(x)}
        &\le L_0+\frac{L_1^2(f(x)-f_\star)}{2\log2-1}
        +\sqrt{L_0\cdot\frac{L_1^2(f(x)-f_\star)}{2\log2-1}}\\
        &\le\frac32L_0+\frac{3L_1^2}{2(2\log2-1)}\bigl(f(x)-f_\star\bigr),
    \end{align*}
    which is \eqref{eq:L-implies-H}. Combined with \eqref{eq:assL}, this
    bounds $\norm{\nabla^2f(x)}$ by the same right-hand side, which is
    \eqref{eq:assH} with the stated constants.
\end{proof}

\begin{restatement}{thm:relative-control}
    Let $M$ be $\kappa$-admissible, let $\rho\ge0$, $\gamma>0$, and run \ref{eq:cssam}. Then every iteration with $\nabla f(x_t)\ne0$ satisfies
    \begin{equation*}
        e^{-\rho}M(x_t)\le M(y_t)\le e^\rho M(x_t),
        \qquad
        \frac{\norm{\nabla f(y_t)-\nabla f(x_t)}}{\norm{\nabla f(x_t)}}\le e^\rho-1.
    \end{equation*}
    In particular, $D_{\mathrm{CS\text{-}SAM}}\le e^\rho-1$.
\end{restatement}

\begin{proof}[Proof of \Cref{thm:relative-control}]
    The two claims are the inequalities in \eqref{eq:geometry1} of \Cref{lem:geometry}, which holds for every $\kappa$-admissible scale $M$ and every $\rho\ge0$; since $\nabla f(x_t)\ne0$, dividing the second inequality by $\norm{\nabla f(x_t)}$ gives the distortion bound.
\end{proof}

\newpage
\section{Proofs for \texorpdfstring{\Cref{sec:rates}}{Section 3}}
\label{app:rates}

In this section, we present the proofs of \Cref{thm:L-rates} in \Cref{app:L} and of \Cref{thm:H-rates} in \Cref{app:H}; for completeness, each theorem is restated before its proofs. 

\subsection{\texorpdfstring{Proofs for \Cref{sec:L}: the $(L_0,L_1)$ curvature scale}{Proofs for Section 3.1: the (L0,L1) curvature scale}}
\label{app:L}

Throughout this subsection $f$ satisfies \eqref{eq:assL} and $M=M_L$,
which is admissible with $\kappa=L_1$ by \Cref{prop:admissible}(ii),
so \Crefrange{lem:local}{lem:fejer} apply with
$M(x_t)=L_0+L_1\norm{\nabla f(x_t)}$ and $M(y_t)=L_0+L_1\norm{\nabla f(y_t)}$.
We use repeatedly that the map $s\mapsto s^2/(L_0+L_1s)$ is increasing on
$[0,\infty)$, since its derivative is $(2L_0s+L_1s^2)/(L_0+L_1s)^2\ge0$.
In this setting \Cref{lem:descent} reads
\begin{align}
    f(x_{t+1})-f_\star\le\bigl(f(x_t)-f_\star\bigr)
    -\gamma e^{-\rho}\chi(\rho,\gamma)\,
    \frac{\norm{\nabla f(x_t)}^2}{L_0+L_1\norm{\nabla f(x_t)}} .
    \label{eq:descent-L}
\end{align}

\begin{restatement}{thm:L-rates}
    Let $f$ satisfy \eqref{eq:assL}, let $\rho\ge0$ and $\gamma>0$ satisfy $\chi(\rho,\gamma)>0$, and let $(x_t)_{t\ge0}$ be generated by \ref{eq:cssam} with $M=M_L$. Then, for every $T\ge1$, the following hold:
    \begin{enumerate}[label=(\roman*)]
        \item
        \textit{(Nonconvex rate)}
        \begin{align}\tag{\ref{eq:L-nonconvex}}
            \min_{0\le t<T}\norm{\nabla f(x_t)}^2
            \le\left(\sqrt{\frac{e^{\rho}L_0\,\bigl(f(x_0)-f_\star\bigr)}{\gamma\chi(\rho,\gamma)\,T}}
            +\frac{e^{\rho}L_1\,\bigl(f(x_0)-f_\star\bigr)}{\gamma\chi(\rho,\gamma)\,T}\right)^{2}.
        \end{align}
        \item
        \textit{(Convex rate)} If in addition $f$ is convex with a minimizer $x_\star$ and $c_0\ge0$, then
        \begin{align}\tag{\ref{eq:L-convex}}
            f(x_T)-f_\star
            \le\frac{e^{\rho}\Bigl(L_0\norm{x_0-x_\star}^2
            +L_1\norm{x_0-x_\star}\bigl(f(x_0)-f_\star\bigr)\Bigr)}
            {\gamma\chi(\rho,\gamma)\,T}.
        \end{align}
        \item
        \textit{(PL rate)} If in addition $f$ satisfies the $\mu$-PL condition \eqref{eq:PL}, then
        \begin{align}\tag{\ref{eq:L-PL-a}}
            f(x_T)-f_\star
            \le\left(1-\frac{2\mu\gamma e^{-\rho}\chi(\rho,\gamma)}
            {L_0+L_1\sqrt{2\mu\,\bigl(f(x_0)-f_\star\bigr)}}\right)^{T}
            \bigl(f(x_0)-f_\star\bigr).
        \end{align}
    \end{enumerate}
\end{restatement}

\begin{proof}[Proof of \Cref{thm:L-rates}]
    \begin{enumerate}[label=(\roman*)]
        \item Summing \eqref{eq:descent-L} over $t<T$ gives
        \begin{align*}
            \gamma e^{-\rho}\chi(\rho,\gamma)
            \sum_{t<T}\frac{\norm{\nabla f(x_t)}^2}{L_0+L_1\norm{\nabla f(x_t)}}
            \le f(x_0)-f(x_T)\le f(x_0)-f_\star .
        \end{align*}
        Since the map $s\mapsto s^2/(L_0+L_1s)$ is increasing, each summand is
        bounded below by the value of this map at the smallest of the gradient
        norms, hence
        \begin{align*}
            \Bigl(\min_{t<T}\norm{\nabla f(x_t)}\Bigr)^2
            \le\frac{e^{\rho}\bigl(f(x_0)-f_\star\bigr)}{\gamma\chi(\rho,\gamma)\,T}
            \Bigl(L_0+L_1\min_{t<T}\norm{\nabla f(x_t)}\Bigr).
        \end{align*}
        Since $s^2\le A+Bs$ with $A,B,s\ge0$ implies $s\le B+\sqrt A$, we get
        \begin{align*}
            \min_{t<T}\norm{\nabla f(x_t)}
            \le\sqrt{\frac{e^{\rho}L_0\bigl(f(x_0)-f_\star\bigr)}{\gamma\chi(\rho,\gamma)\,T}}
            +\frac{e^{\rho}L_1\bigl(f(x_0)-f_\star\bigr)}{\gamma\chi(\rho,\gamma)\,T},
        \end{align*}
        and squaring gives \eqref{eq:L-nonconvex}.

        \item Since $c_0\ge0$, \Cref{lem:fejer} gives
        $\norm{x_t-x_\star}\le\norm{x_0-x_\star}$ for all $t$. Convexity and the
        Cauchy--Schwarz inequality give
        \begin{align*}
            f(x_t)-f_\star\le\ip{\nabla f(x_t)}{x_t-x_\star}\le\norm{\nabla f(x_t)}\norm{x_0-x_\star} .
        \end{align*}
        Since the map $s\mapsto s^2/(L_0+L_1s)$ is increasing and
        $f(x_t)-f_\star\le f(x_0)-f_\star$,
        \begin{align*}
            \frac{\norm{\nabla f(x_t)}^2}{L_0+L_1\norm{\nabla f(x_t)}}
            &\ge\frac{\bigl(f(x_t)-f_\star\bigr)^2}
            {L_0\norm{x_0-x_\star}^2+L_1\norm{x_0-x_\star}\bigl(f(x_t)-f_\star\bigr)}\\
            &\ge\frac{\bigl(f(x_t)-f_\star\bigr)^2}
            {L_0\norm{x_0-x_\star}^2+L_1\norm{x_0-x_\star}\bigl(f(x_0)-f_\star\bigr)} .
        \end{align*}
        Inserting this into \eqref{eq:descent-L} gives
        \begin{align*}
            f(x_{t+1})-f_\star&\le\bigl(f(x_t)-f_\star\bigr)\\
            &\quad-\frac{\gamma e^{-\rho}\chi(\rho,\gamma)}
            {L_0\norm{x_0-x_\star}^2+L_1\norm{x_0-x_\star}\bigl(f(x_0)-f_\star\bigr)}
            \bigl(f(x_t)-f_\star\bigr)^2 .
        \end{align*}
        Since $f(x_t)-f_\star$ is nonincreasing by \Cref{lem:descent},
        \Cref{lem:recursion} applies and gives \eqref{eq:L-convex}.

        \item If $f(x_0)=f_\star$ all iterates are minimizers and there is nothing to prove,
        so let $f(x_0)>f_\star$. The PL condition \eqref{eq:PL} gives
        $\norm{\nabla f(x_t)}\ge\sqrt{2\mu(f(x_t)-f_\star)}$, so by the monotonicity
        of $s\mapsto s^2/(L_0+L_1s)$, \eqref{eq:descent-L}, and
        $f(x_t)-f_\star\le f(x_0)-f_\star$,
        \begin{align*}
            f(x_{t+1})-f_\star
            &\le\bigl(f(x_t)-f_\star\bigr)-\gamma e^{-\rho}\chi(\rho,\gamma)\,
            \frac{2\mu\bigl(f(x_t)-f_\star\bigr)}{L_0+L_1\sqrt{2\mu(f(x_t)-f_\star)}}\\
            &\le\Bigl(1-\frac{2\mu\gamma e^{-\rho}\chi(\rho,\gamma)}
            {L_0+L_1\sqrt{2\mu(f(x_0)-f_\star)}}\Bigr)\bigl(f(x_t)-f_\star\bigr).
        \end{align*}
        Applying this with $t=0$ and using $f(x_1)-f_\star\ge0$ and
        $f(x_0)-f_\star>0$ shows that the factor in parentheses is nonnegative, and
        it is less than one since $\mu\gamma e^{-\rho}\chi(\rho,\gamma)>0$. Induction
        on $t$ then gives \eqref{eq:L-PL-a}.
    \end{enumerate}
\end{proof}

\subsection{\texorpdfstring{Proofs for \Cref{sec:H}: the $(H_0,H_1)$ curvature scale}{Proofs for Section 3.2: the (H0,H1) curvature scale}}
\label{app:H}

Throughout this subsection $f$ satisfies \eqref{eq:assH}, is nonconstant,
and $M=M_H$, which is positive and admissible with $\kappa=\sqrt{H_1}$ by
\Cref{prop:admissible}(iii), so \Crefrange{lem:local}{lem:fejer} apply with $M(x_t)=H_0+H_1(f(x_t)-f_\star)$ and
$M(y_t)=H_0+H_1(f(y_t)-f_\star)$. Since $f(x_t)-f_\star$ is nonincreasing by
\Cref{lem:descent}, we have $M(x_t)\le H_0+H_1(f(x_0)-f_\star)$ for all
$t$, and \Cref{lem:descent} gives
\begin{align}
    \begin{gathered}
        f(x_{t+1})-f_\star\le\bigl(f(x_t)-f_\star\bigr)
        -\gamma e^{-\rho}\chi(\rho,\gamma)\,
        \frac{\norm{\nabla f(x_t)}^2}{H_0+H_1(f(x_t)-f_\star)}\\
        \le\bigl(f(x_t)-f_\star\bigr)
        -\gamma e^{-\rho}\chi(\rho,\gamma)\,
        \frac{\norm{\nabla f(x_t)}^2}{H_0+H_1(f(x_0)-f_\star)} .
    \end{gathered}
    \label{eq:descent-H}
\end{align}
Note that the second inequality in \eqref{eq:descent-H} has a constant coefficient in front of $\norm{\nabla f(x_t)}^2$, which is why the $H_1$ contribution enters the rates of \Cref{thm:H-rates} at the leading order, in contrast to \Cref{thm:L-rates}.

\begin{restatement}{thm:H-rates}
    Let $f$ satisfy \eqref{eq:assH} and be nonconstant, let $\rho\ge0$ and $\gamma>0$ satisfy $\chi(\rho,\gamma)>0$, and let $(x_t)_{t\ge0}$ be generated by \ref{eq:cssam} with $M=M_H$. Then, for every $T\ge1$, the following hold:
    \begin{enumerate}[label=(\roman*)]
        \item
        \textit{(Nonconvex rate)}
        \begin{align}\tag{\ref{eq:H-nonconvex}}
            \min_{0\le t<T}\norm{\nabla f(x_t)}^2
            \le\frac{e^{\rho}\bigl(H_0+H_1(f(x_0)-f_\star)\bigr)\bigl(f(x_0)-f_\star\bigr)}
            {\gamma\chi(\rho,\gamma)\,T}.
        \end{align}
        \item
        \textit{(Convex rate)} If in addition $f$ is convex with a minimizer $x_\star$ and $c_0\ge0$, then
        \begin{align}\tag{\ref{eq:H-convex}}
            f(x_T)-f_\star
            \le\frac{e^{\rho}\bigl(H_0+H_1(f(x_0)-f_\star)\bigr)\norm{x_0-x_\star}^2}
            {\gamma\chi(\rho,\gamma)\,T}.
        \end{align}
        \item
        \textit{(PL rate)} If in addition $f$ satisfies the $\mu$-PL condition \eqref{eq:PL}, then
        \begin{align}\tag{\ref{eq:H-PL-a}}
            f(x_T)-f_\star
            \le\left(1-\frac{2\mu\gamma e^{-\rho}\chi(\rho,\gamma)}
            {H_0+H_1\bigl(f(x_0)-f_\star\bigr)}\right)^{T}
            \bigl(f(x_0)-f_\star\bigr).
        \end{align}
    \end{enumerate}
\end{restatement}

\begin{proof}[Proof of \Cref{thm:H-rates}]
    \begin{enumerate}[label=(\roman*)]
        \item Summing the second inequality in \eqref{eq:descent-H} over $t<T$ gives
        \begin{align*}
            \frac{\gamma e^{-\rho}\chi(\rho,\gamma)}{H_0+H_1(f(x_0)-f_\star)}
            \sum_{t<T}\norm{\nabla f(x_t)}^2
            \le f(x_0)-f(x_T)\le f(x_0)-f_\star ,
        \end{align*}
        and the minimum over $t<T$ is at most the average, which gives
        \eqref{eq:H-nonconvex}.
        
        \item Since $c_0\ge0$, \Cref{lem:fejer} gives
        $\norm{x_t-x_\star}\le\norm{x_0-x_\star}$, and convexity gives
        $\norm{\nabla f(x_t)}\ge(f(x_t)-f_\star)/\norm{x_0-x_\star}$ as in the proof
        of \Cref{thm:L-rates}(ii). Inserting this into the second inequality in
        \eqref{eq:descent-H},
        \begin{align*}
            f(x_{t+1})-f_\star\le\bigl(f(x_t)-f_\star\bigr)
            -\frac{\gamma e^{-\rho}\chi(\rho,\gamma)}
            {\norm{x_0-x_\star}^2\bigl(H_0+H_1(f(x_0)-f_\star)\bigr)}
            \bigl(f(x_t)-f_\star\bigr)^2,
        \end{align*}
        and \Cref{lem:recursion} gives \eqref{eq:H-convex}.

        \item Let $f(x_0)>f_\star$, the other case being trivial. The PL condition \eqref{eq:PL} and the
        second inequality in \eqref{eq:descent-H} give
        \begin{align*}
            f(x_{t+1})-f_\star
            \le\Bigl(1-\frac{2\mu\gamma e^{-\rho}\chi(\rho,\gamma)}{H_0+H_1(f(x_0)-f_\star)}\Bigr)
            \bigl(f(x_t)-f_\star\bigr).
        \end{align*}
        As in the proof of \Cref{thm:L-rates}(iii), the case $t=0$ shows that the
        factor in parentheses lies in $[0,1)$, and induction gives \eqref{eq:H-PL-a}.
    \end{enumerate}
\end{proof}

\newpage
\section{Experimental details}
\label{app:exps}

In this section, we give the details needed to reproduce the experiments of \Cref{sec:experiments}. \Cref{app:verification-details} specifies the one-dimensional verification experiments of \Cref{sec:verification}: the update rules, the curvature scales and the constants that make them valid everywhere, the stopping rules, and the evaluation of the distortion. \Cref{app:dnn-details} gives, for the deep-network experiments of \Cref{sec:dnn}, the training protocol, the minibatch curvature scales and their fixed coefficients, the tuning campaign with the complete grid of every method, the selection rule, and the selected configuration of every method in every model--dataset setting (\Cref{tab:selected}).

\medskip
\noindent
The pseudocode for \ref{eq:cssam} is given in \Cref{alg:cssam}.\begin{algorithm}[h]
    \caption{Curvature-Scaled SAM (\ref{eq:cssam}).}
    \label{alg:cssam}
    \begin{algorithmic}[1]
        \Require starting point $x_0\in\R^d$; perturbation multiplier $\rho\ge0$; step multiplier
        $\gamma>0$; admissible curvature scale $M$ (\Cref{def:admissible})
        \For{$t=0,1,\ldots$}
            \If{$\nabla f(x_t)=0$}
                \State \Return $x_t$ \Comment{stationary point reached; the method stops}
            \EndIf
            \State $y_t \gets x_t+\dfrac{\rho}{M(x_t)}\,\nabla f(x_t)$
            \State $x_{t+1} \gets x_t-\dfrac{\gamma}{M(y_t)}\,\nabla f(y_t)$
        \EndFor
    \end{algorithmic}
\end{algorithm}

\subsection{Verification experiments of \texorpdfstring{\Cref{sec:verification}}{Section 4.1}}
\label{app:verification-details}

The experiments of \Cref{sec:verification} are deterministic and one-dimensional, with analytic function values and gradients, so there are no datasets, minibatches, random seeds, or hyperparameter searches. All computations use double precision.

\paragraph{Update rules and scales.}
The three methods are run exactly as stated in \eqref{eq:sam}, \eqref{eq:usam}, and \eqref{eq:cssam}, with $\nabla f=f'$ and the norm equal to the absolute value. Both objectives attain $f_\star=0$ at $x=0$, and the two scales are
\begin{equation*}
    M_L(x)=L_0+L_1|f'(x)|,
    \qquad
    M_H(x)=H_0+H_1\bigl(f(x)-f_\star\bigr).
\end{equation*}
For the quadratic $f(x)=x^2/2$ we have $f''\equiv1$ and take $(L_0,L_1)=(H_0,H_1)=(1,0)$, so that $M_L=M_H\equiv1$ and a single \ref{eq:cssam} curve is shown. For the quartic $f(x)=x^4/4$ we have $f''(x)=3x^2$ and take $(L_0,L_1)=(4,1)$ and $(H_0,H_1)=(3,3)$, that is, $M_L(x)=4+|x|^3$ and $M_H(x)=3+3x^4/4$. 

\paragraph{Parameter regime.}
The common choice $\rho=\gamma=1/8$ for \ref{eq:cssam} lies in the regime of \Cref{thm:L-rates,thm:H-rates}. With the constants of \eqref{eq:constants}, $2-e^{1/8}\approx0.867$ and $\gamma\omega(\gamma)e^{3\rho}=\tfrac18\,\omega(\tfrac18)\,e^{3/8}\approx0.095$, so that $\chi(1/8,1/8)\approx0.772>0$; moreover $\rho(e^{\rho}-1)/(2\log2-1)\approx0.043$ and $\gamma e^{3\rho}/(2(2\log2-1))\approx0.235$, so that $c_0\approx0.722\ge0$. Since both objectives are convex with the minimizer $x_\star=0$, the convergence of \ref{eq:cssam} in \Cref{fig:endpoint-quadratic,fig:endpoint-quartic} is the one guaranteed by \Cref{thm:L-rates}(ii) with $M=M_L$ and by \Cref{thm:H-rates}(ii) with $M=M_H$, and the bound $e^{\rho}-1$ in \Cref{fig:distortion-quadratic,fig:distortion-quartic} is that of \Cref{thm:relative-control}, which holds for every $\rho\ge0$.

\paragraph{Two-cycle and divergence (\Cref{fig:endpoint-quadratic,fig:endpoint-quartic}).}
A run stops when $|f'(x)|\le10^{-12}$, when an update no longer changes the double-precision iterate or when an iterate or a perturbed point has absolute value at least $10^{12}$. On the quadratic, \ref{eq:sam} uses $r=1$ and $\eta=0.2$, and \ref{eq:cssam} uses $\rho=\gamma=1/8$, both from $x_0=3$ with at most 500 updates. The \ref{eq:sam} recurrence is $x_{t+1}=0.8x_t-0.2\sign(x_t)$, whose two-cycle has amplitude $a=\eta r/(2-\eta)=1/9$, marked by the dashed lines, and the \ref{eq:cssam} recurrence is $x_{t+1}=(1-\gamma(1+\rho))x_t=\tfrac{55}{64}x_t$. \ref{eq:sam} completes the 500 updates, whereas \ref{eq:cssam} reaches the gradient tolerance after 190 updates. On the quartic, \ref{eq:usam} uses $\rho=1/8$ and $\eta=1/32$ with at most 20 updates, and \ref{eq:cssam} uses $\rho=\gamma=1/8$ with either scale and at most $10{,}000$ updates, all from $x_0=4$. For \ref{eq:usam}, the first perturbed point is $y_0=4+\tfrac18\cdot4^3=12$, so that $x_1=4-\tfrac1{32}\cdot12^3=-50$, then $x_2\approx1.2\times10^{11}$, and the next perturbed point exceeds the cutoff; the trace therefore ends after two completed updates, and the plotted curve consists of its three recorded points. Both \ref{eq:cssam} traces complete the $10{,}000$ updates. \Cref{fig:endpoint-quartic} shows $|x_t|$ against the number $2t$ of gradient evaluations, two per completed update, over the first 50 updates. Note that the cutoff is a numerical termination rule; the divergence itself is the statement of \Cref{prop:endpoint-failures}(ii), whose threshold $(2/(\eta\rho^3))^{1/8}\approx3.67$ is exceeded by $x_0=4$.

\paragraph{Distortion (\Cref{fig:distortion-quadratic,fig:distortion-quartic}).}
These figures evaluate independent points rather than trajectories. At each nonstationary $x$ we form the perturbed point $y$ of the method and record the gradient norm $s=|f'(x)|$ and the distortion $D=|f'(y)-f'(x)|/|f'(x)|$ of \eqref{eq:distortion}; no outer step enters. The perturbations are those of \ref{eq:sam} with $r=1/8$ on the quadratic, of \ref{eq:usam} with $\rho=1/8$ on the quartic, and of \ref{eq:cssam} with $\rho=1/8$ on both, with each scale separately on the quartic. Note that the \ref{eq:sam} radius is $1$ in the two-cycle experiment and $1/8$ here; the two are separate experiments, and a common numerical value of the parameters does not mean equal perturbation lengths across methods. The dashed line is the bound $e^{\rho}-1=e^{1/8}-1\approx0.133$ of \Cref{thm:relative-control}. 

\subsection{Deep-network experiments of \texorpdfstring{\Cref{sec:dnn}}{Section 4.2}}
\label{app:dnn-details}

\paragraph{Training protocol.}
All runs train ResNet-20 or ResNet-32 with batch normalization on CIFAR-10 or CIFAR-100 for 200 epochs with batch size 128 and the cross-entropy loss, with random-crop and horizontal-flip augmentation and no cutout. Momentum and weight decay are zero for every method. The test accuracy is evaluated after every epoch. Every method computes two minibatch gradients per iteration, one at the iterate and one at the perturbed point, so that equal numbers of epochs correspond to equal numbers of gradient evaluations. The runs were performed on four RTX 6000 Ada GPUs.

\paragraph{Learning-rate schedules.}
The schedule of \ref{eq:sam} and \ref{eq:usam} acts on the outer step $\eta_t$ only; the radius $r$ and the multiplier $\rho$ are constant throughout training. With the constant schedule, $\eta_t\equiv\eta$. With the cosine schedule, the outer step is held fixed within each epoch and annealed from $\eta$ towards $0$ over the $E=200$ epochs: during epoch $e\in\{0,\dots,E-1\}$,
\begin{equation}
    \eta_t=\frac{\eta}{2}\Bigl(1+\cos\frac{\pi e}{E}\Bigr),
    \label{eq:cosine}
\end{equation}
so that $\eta_t=\eta$ during the first epoch and $\eta_t\approx6\times10^{-5}\,\eta$ during the last. This is cosine annealing without restarts, warm-up, or a positive floor \citep{loshchilov2017sgdr}, applied once per epoch through the scheduler interface of PyTorch \citep{paszke2019pytorch}.

\paragraph{Minibatch curvature scales.}
As stated in \Cref{sec:dnn}, the $M_L$ scale of a minibatch is $L_0+L_1\norm{\nabla f_i(x)}$ and the $M_H$ scale is $H_0+H_1[f_i(x)-\ell_i^\star]_+$ with $\ell_i^\star=0$, where $f_i$ and $\nabla f_i$ are the minibatch loss and gradient. Note that multiplying $(\rho,\gamma,L_0,L_1)$, or $(\rho,\gamma,H_0,H_1)$, by one positive constant leaves \ref{eq:cssam} unchanged, since the perturbation step $\rho/M(x_t)$ and the descent step $\gamma/M(y_t)$ are ratios. Only three of the four coefficients of each scale are therefore free, and we fix $L_1=1$ for the $M_L$ scale and $H_0=1$ for the $M_H$ scale, so that the tuned coefficients are $(\rho,\gamma,L_0)$ and $(\rho,\gamma,H_1)$ respectively. The coefficients are constant over training; \ref{eq:cssam} uses no learning-rate schedule, since the perturbation and the descent step adapt through the scale.

\paragraph{Tuning protocol.}
The protocol has three stages, each run on all four model--dataset settings.
\begin{enumerate}[label=(\roman*)]
    \item \emph{Baselines.} \ref{eq:sam} and \ref{eq:usam}, each with a constant and with a cosine annealing schedule, are tuned on seed 3107 over a $4\times4$ grid of learning rate and radius (multiplier): $\eta\in\{0.03,0.1,0.3,1\}$ for all four baselines, $r\in\{0.01,0.05,0.1,0.2\}$ for \ref{eq:sam}, and $\rho\in\{0.1,0.3,1,3\}$ for \ref{eq:usam}. For the cosine annealing schedule \eqref{eq:cosine}, $\eta$ is the initial learning rate. 
    
    \item \emph{\ref{eq:cssam}.} The $M_L$ scale is tuned on seed 3107 over $\rho\in\{0.2,0.4,0.8,1.6\}$, $\gamma\in\{0.2,0.4,0.8\}$, and $L_0\in\{0.02,0.5,2\}$ with $L_1=1$, and the $M_H$ scale over $\rho\in\{0.3,1,3\}$, $\gamma\in\{0.3,1,3\}$, and $H_1\in\{1,3,10,30\}$ with $H_0=1$. This stage comprises 36 configurations per scale and setting, that is, 288 runs. 
    
    \item \emph{Seeds.} For every method and setting, the configuration with the highest best test accuracy over the 200 epochs on seed 3107 is rerun with seeds 3407 and 4096. \Cref{tab:cifar} reports the mean and the sample standard deviation over the three seeds of the best test accuracy over epochs.
\end{enumerate}

\paragraph{Selected configurations.}
\Cref{tab:selected} lists, for every method and setting, the configuration selected in stages (i) and (ii) and rerun in stage (iii); these are the configurations behind the entries of \Cref{tab:cifar}.

\begin{table}[h]
    \centering
    \caption{Configurations selected on seed 3107 by the best test accuracy over 200 epochs, for every method and setting of \Cref{tab:cifar}. For \ref{eq:sam} and \ref{eq:usam}, $\eta$ is the constant learning rate or the initial learning rate of the cosine annealing schedule \eqref{eq:cosine}, $r$ is the radius of \ref{eq:sam}, and $\rho$ is the multiplier of \ref{eq:usam}. For \ref{eq:cssam}, $\rho$ and $\gamma$ are the perturbation and step multipliers, $L_0$ is the constant term of the $M_L$ scale $L_0+L_1\norm{\nabla f_i(x)}$ with $L_1=1$, and $H_1$ is the loss coefficient of the $M_H$ scale $H_0+H_1[f_i(x)-\ell_i^\star]_+$ with $H_0=1$ and $\ell_i^\star=0$.}
    \label{tab:selected}
    \resizebox{\linewidth}{!}{%
    \begin{tabular}{l l cccc}
    \toprule
    Method & Parameters & ResNet-20 / CIFAR-10 & ResNet-32 / CIFAR-10 & ResNet-20 / CIFAR-100 & ResNet-32 / CIFAR-100 \\
    \midrule
    \ref{eq:sam}, constant lr & $(\eta,r)$ & $(0.3,\,0.2)$ & $(0.3,\,0.2)$ & $(0.1,\,0.2)$ & $(0.1,\,0.2)$ \\
    \addlinespace
    \ref{eq:sam}, cosine & $(\eta,r)$ & $(1,\,0.1)$ & $(0.3,\,0.2)$ & $(0.3,\,0.2)$ & $(0.3,\,0.2)$ \\
    \addlinespace
    \ref{eq:usam}, constant lr & $(\eta,\rho)$ & $(0.3,\,0.3)$ & $(0.3,\,1)$ & $(0.3,\,1)$ & $(0.3,\,0.3)$ \\
    \addlinespace
    \ref{eq:usam}, cosine & $(\eta,\rho)$ & $(0.3,\,0.3)$ & $(0.3,\,0.3)$ & $(0.3,\,0.3)$ & $(0.3,\,0.3)$ \\
    \addlinespace
    \ref{eq:cssam} with $M_L$ & $(\rho,\gamma,L_0)$ & $(0.8,\,0.4,\,0.02)$ & $(0.8,\,0.4,\,0.02)$ & $(0.4,\,0.2,\,0.02)$ & $(0.4,\,0.4,\,0.02)$ \\
    \addlinespace
    \ref{eq:cssam} with $M_H$ & $(\rho,\gamma,H_1)$ & $(3,\,3,\,10)$ & $(3,\,1,\,1)$ & $(3,\,3,\,3)$ & $(3,\,3,\,3)$ \\
    \bottomrule
    \end{tabular}
    }
\end{table}

\end{document}